\newtheorem{thm}{Theorem}[section]
\newtheorem{cor}[thm]{Corollary}
\newtheorem{fact}[thm]{Fact}
\theoremstyle{definition}
\newtheorem{df}[thm]{Definition}
\newtheorem{convention}[thm]{Convention}
\newcommand{\BN}{\mathbb{N}}
\newcommand{\BL}{\mathbb{L}}
\newcommand{\BR}{\mathbb{R}}
\newcommand{\cu}[1]{\mathcal{#1}}
\newcommand{\Th}{\operatorname{Th}}
\def\indsym#1#2{%
  \setbox0=\hbox{$\m@th#1x$}%
  \kern\wd0%
  \hbox to 0pt{\hss$\m@th#1\mid$\hbox to 0pt{$\m@th#1^{#2}$}\hss}%
  \lower.9\ht0\hbox to 0pt{\hss$\m@th#1\smile$\hss}%
  \kern\wd0}
\def\nindsym#1#2{%
  \setbox0=\hbox{$\m@th#1x$}%
  \kern\wd0%
  \hbox to 0pt{\hss$\m@th#1\not$\kern1.4\wd0\hss}
  \hbox to 0pt{\hss$\m@th#1\mid$\hbox to 0pt{$\m@th#1^{\,#2}$}\hss}%
  \lower.9\ht0\hbox to 0pt{\hss$\m@th#1\smile$\hss}%
  \kern\wd0}
\def\dotminussym#1#2{%
  \setbox0=\hbox{$\m@th#1-$}%
  \kern.5\wd0%
  \hbox to 0pt{\hss\hbox{$\m@th#1-$}\hss}%
  \raise.6\ht0\hbox to 0pt{\hss$\m@th#1.$\hss}%
  \kern.5\wd0}
\newcommand{\dotminus}{\mathbin{\mathpalette\dotminussym{}}}
\def\dotlesym#1#2{%
  \setbox0=\hbox{$\m@th#1-$}%
  \kern.5\wd0%
  \hbox to 0pt{\hss\hbox{$\m@th#1\le$}\hss}%
  \raise 1\ht0\hbox to 0pt{\hss$\m@th#1.$\hss}%
  \kern.5\wd0}
\newcommand{\dotle}{\mathbin{\mathpalette\dotlesym{}}}
\def\dotgesym#1#2{%
  \setbox0=\hbox{$\m@th#1-$}%
  \kern.5\wd0%
  \hbox to 0pt{\hss\hbox{$\m@th#1\ge$}\hss}%
  \raise 1\ht0\hbox to 0pt{\hss$\m@th#1.$\hss}%
  \kern.5\wd0}
\newcommand{\dotge}{\mathbin{\mathpalette\dotgesym{}}}
\begin{document}

\title{Continuous  Craig Interpolation}

\author{H. Jerome Keisler}

\address{University of Wisconsin-Madison, Department of Mathematics, Madison,  WI 53706-1388}
\email{keisler@math.wisc.edu}

\date{\today}

%\maketitle

\begin{abstract}
We prove  analogues of the Craig interpolation theorem for the continuous model theory of metric structures.
 \end{abstract}

\maketitle

%\tableofcontents

\section{Introduction}

In this paper we will prove two analogues of the Craig Interpolation Theorem for the continuous
model theory of metric structures
as developed in [BBHU] (2008).
The model theory of metric structures is currently an active area of research with
many applications to analysis, so it is important to clarify what happens to Craig interpolation in that setting.

In classical first order model theory, the Craig Interpolation Theorem ([Cr] 1957) says that:

\emph{For every sentence $\varphi$
in a vocabulary $V$ and $\psi$ in a vocabulary $W$ such that $\varphi\models\psi$, there is a sentence
$\theta$ in the common vocabulary $V\cap W$ such that $\varphi\models\theta$ and $\theta\models \psi$.}

The sentence $\theta$ is called a (Craig)  \emph{interpolant} of $\varphi$ and $\psi$.
Craig's proof used proof-theoretic methods.
A closely related result, the  Robinson Consistency Theorem ([Ro1] 1956), says that:

\emph{For every theory $T_V$ in a vocabulary $V$,
$T_W$ in a vocabulary $W$, and complete theory $T$ in the common vocabulary $V\cap W$, if each of
$T\cup T_V$ and $T\cup T_W$ is consistent then $T\cup T_V\cup T_W$ is consistent.}

Around 1959, several people noticed that the Craig Interpolation Theorem can be proved fairly easily from the Robinson Consistency Theorem, and vice versa
(see Feferman [Fe] (2008) and Robinson [Ro2] (1963), pp. 114--117).

This paper was partly motivated by ongoing work with my son Jeffrey M. Keisler.  In [KK1] (2012) and [KK2] (2014) we applied the first order Craig Interpolation Theorem
to questions arising from the field of decision analysis.  The present paper will enable the extension of that work from a discrete to a continuous setting.

In the introduction to the paper  [BYP] (2010), Ben Yaacov and Petersen wrote that ``continuous first-order logic satisfies a suitably phrased form of Craig's
interpolation theorem.'' (We will return to that point later).
The paper [BYP] did not prove or even state a form
of the Craig Interpolation Theorem for continuous logic.
However, the  paper [BYP] did develop a notion of formal proof for continuous logic
that could perhaps be used to  prove interpolation theorems.
A continuous analogue of Beth's Theorem was stated and proved in [BBHU] (Theorem 9.32).  See also the monograph [Fa] (2021)).

We assume the reader is familiar with [BBHU].
In order to state our main results we give a few reminders to fix notation.
 Instead of the equality predicate symbol, the logic in [BBHU] has a distinguished binary predicate
symbol $d$, called the \emph{distance predicate}.
We fix a set $L$ of finitary predicate and function symbols such that $d\in L$,
and a \emph{metric signature} $\BL$ that specifies a modulus of uniform continuity
with respect to $d$ for each predicate or function symbol in $L$.
A \emph{vocabulary} $V$ is a set $V\subseteq L$ such that $d\in V$.
Atomic formulas with vocabulary $V$ are the same as in first order logic.
$[0,1]$-\emph{valued structures} are like first order structures
except that the atomic formulas take values in $[0,1]$ instead of $\{\top,\bot\}$.

Continuous $V$-\emph{formulas} and $V$-\emph{sentences} are  built from atomic formulas
 with vocabulary $V$ using $\sup, \inf$ as quantifiers, and $n$-ary
 continuous  functions $C\colon[0,1]^n\to[0,1]$ (where $n\in\BN$) as connectives.
Each constant $r\in[0,1]$ also counts as a $V$-sentence (and a $0$-ary connective).
In a $[0,1]$-valued  structure $\cu M$ with vocabulary $V$, each $V$-sentence $\varphi$ has a truth value $\varphi^{\cu M}\in[0,1]$,
and $\varphi$ is called true in $\cu M$ if $\varphi^{\cu M}=0$.

A \emph{metric structure} is a $[0,1]$-valued structure $\cu M$ with a vocabulary $V\subseteq L$
such that the interpretation of $d$ in $\cu M$ is a complete metric, and $\cu M$
respects the bounds of uniform continuity in $\BL$.

\begin{convention}  \label{conv1} $V$ and $W$ will always denote vocabularies, $\varphi$ will always be a $V$-sentence, and
$\psi$ will always be a $W$-sentence.
\end{convention}

We will prove the following two analogues of
Craig Interpolation for metric structures.

\begin{df}  Let $\varepsilon\in(0,1]$.  A \emph{weak $\varepsilon$-interpolant} of $\varphi$ and $\psi$
is a $V\cap W$-sentence $\theta$ such that
$\varphi=0\models\theta=0$ and $\theta=0\models\psi\le\varepsilon$.
\end{df}

Intuitively,  $\theta$ is a sentence in the common language
such that $\theta$ is true whenever $\varphi$ is true, and $\psi$ is almost true whenever $\theta$ is true.

\begin{thm}   \label{t-weak}  (Weak Interpolant)  Suppose $\varphi=0\models\psi=0$.
Then for each $\varepsilon \in (0,1]$, $\varphi$ and $\psi$ have a weak $\varepsilon$-interpolant.
\end{thm}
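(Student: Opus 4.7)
The plan is to prove this by contradiction in the style of Robinson consistency: from the non-existence of a weak $\varepsilon$-interpolant I will construct a $V\cup W$-structure $\cu L$ with $\varphi^{\cu L}=0$ and $\psi^{\cu L}\ge\varepsilon$, contradicting $\varphi=0\models\psi=0$. Let $\Gamma$ denote the collection of all $V\cap W$-sentences $\theta$ satisfying $\varphi=0\models\theta=0$; this set is closed under finite $\max$. By the contradictory assumption no $\theta\in\Gamma$ is a weak $\varepsilon$-interpolant, so for each such $\theta$ there is a $W$-structure realizing $\theta=0$ and $\psi>\varepsilon$. Closure of $\Gamma$ under $\max$ makes the family of $W$-conditions $\{\theta=0:\theta\in\Gamma\}\cup\{\psi\ge\varepsilon\}$ finitely satisfiable, so continuous compactness yields a single $W$-structure $\cu N$ realizing all of them.

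Next I exhibit a $V$-structure $\cu M$ with $\varphi^{\cu M}=0$ whose $V\cap W$-reduct is elementarily equivalent to $\cu N_0:=\cu N\upharpoonright(V\cap W)$, by showing the $V$-theory $\{\varphi=0\}\cup\Th(\cu N_0)$ is satisfiable. If it were not, compactness would supply a single $V\cap W$-sentence $\tau$ with $\tau^{\cu N_0}=0$ such that $\{\varphi=0,\tau=0\}$ is unsatisfiable; applying compactness once more to $\{\varphi=0\}\cup\{\tau\le 1/n:n\ge 1\}$ upgrades this to some $\eta>0$ with $\varphi=0\models\tau\ge\eta$. The $V\cap W$-sentence $\theta:=\eta\dotminus\tau$ then lies in $\Gamma$, yet $\theta^{\cu N_0}=\eta>0$, contradicting $\cu N\models\Gamma$.

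The last step is to amalgamate $\cu M$ and $\cu N$ over their elementarily equivalent $V\cap W$-reducts into a single $V\cup W$-structure $\cu L$. One route is to pass to sufficiently saturated elementary extensions $\cu M^*\succeq\cu M$ (in $V$) and $\cu N^*\succeq\cu N$ (in $W$) of the same density character; their $V\cap W$-reducts are then saturated and elementarily equivalent, hence isomorphic, and transport along the isomorphism identifies the underlying sets with coinciding $V\cap W$-interpretations, producing $\cu L$ with $\varphi^{\cu L}=\varphi^{\cu M}=0$ and $\psi^{\cu L}=\psi^{\cu N}\ge\varepsilon$. Equivalently one may invoke a Keisler-Shelah-type ultrapower theorem for continuous logic, or carry out a back-and-forth chain construction.

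The main obstacle is this final amalgamation: in the continuous setting the relevant saturated-reduct and ultrapower-isomorphism facts require more care than in classical first-order logic and must be cited or justified with precision. The characteristic continuous-logic twist elsewhere is the ``$\tau=0$ inconsistent $\Rightarrow$ $\tau\le\eta$ inconsistent'' promotion in the second step, where the loss of a positive $\eta$ through $\dotminus$ is exactly what forces the interpolant to separate $\varphi=0$ only from $\psi\le\varepsilon$ rather than from $\psi=0$, explaining why ``weak'' is the natural formulation of the theorem.
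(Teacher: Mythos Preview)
Your argument is correct and follows essentially the same route as the paper: define the set $\Gamma$ (the paper calls it $U$) of $V\cap W$-consequences of $\varphi=0$, use compactness to get a $W$-model of $\Gamma$ in which $\psi$ is bounded away from $0$, show via the Compactness Corollary that $\{\varphi=0\}\cup\Th(\cu N\upharpoonright V\cap W)$ is consistent, and then amalgamate over the common reduct. The only organizational difference is that the paper isolates the amalgamation step as a separate Robinson Consistency theorem proved with $\kappa$-special models (which exist in ZFC and whose reducts remain special), whereas you inline it and reach for saturated models; your own caveat about this step is well placed, and the paper's use of special models is exactly the clean citation you would want there.
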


\begin{df}  Let $\varepsilon\in(0,1]$.  A \emph{strong $\varepsilon$-interpolant} of $\varphi$ and $\psi$
is a $V\cap W$-sentence $\theta$ such that
$\models\varphi\ge \theta$ and $\models\theta\ge\psi - \varepsilon$.
\end{df}

\begin{thm} \label{t-strong}  (Strong Interpolant)  Suppose that
$\models \varphi\ge\psi.$
Then for each $\varepsilon\in (0,1]$, $\varphi$ and $\psi$ have a strong $\varepsilon$-interpolant.
\end{thm}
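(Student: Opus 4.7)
My plan is to reduce Theorem~\ref{t-strong} to Theorem~\ref{t-weak} by a semantic argument on the type space $S_{V\cap W}$ of complete $V\cap W$-types. Write $\hat\varphi\colon S_V\to[0,1]$ and $\hat\psi\colon S_W\to[0,1]$ for the continuous functions induced by $\varphi$ and $\psi$, and let $r_V\colon S_V\to S_{V\cap W}$, $r_W\colon S_W\to S_{V\cap W}$ be the restriction maps; as continuous maps between compact Hausdorff spaces they are closed. Define
\[
F(q)\;=\;\inf\{\hat\varphi(p):p\in r_V^{-1}(q)\},\qquad
G(q)\;=\;\sup\{\hat\psi(p):p\in r_W^{-1}(q)\}.
\]
Because $r_V,r_W$ are closed, $F$ is lower and $G$ is upper semicontinuous on $S_{V\cap W}$.

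The key step is to use Theorem~\ref{t-weak} to show $G\le F$ on $S_{V\cap W}$. Suppose for contradiction that $G(q)-F(q)>\eta>0$ for some $q$, pick $s$ with $F(q)+\eta/3<s<G(q)-\eta/3$, a $V$-structure $\cu M_V\models q$ with $\varphi^{\cu M_V}<s$, and a $W$-structure $\cu M_W\models q$ with $\psi^{\cu M_W}>s+\eta/4$. Since $\models\varphi\ge\psi$, the $V$-sentence $(\varphi-s)^+$ and the $W$-sentence $(\psi-s)^+$ satisfy $(\varphi-s)^+=0\models(\psi-s)^+=0$, so Theorem~\ref{t-weak} with tolerance $\eta/8$ yields a $V\cap W$-sentence $\theta^*$ with $(\varphi-s)^+=0\models\theta^*=0$ and $\theta^*=0\models(\psi-s)^+\le\eta/8$. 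Applied in $\cu M_V$ this gives $\theta^{*,\cu M_V}=0$; because $\theta^*$ is a $V\cap W$-sentence and $q$ is complete, its value depends only on $q$, so also $\theta^{*,\cu M_W}=0$, whence $\psi^{\cu M_W}\le s+\eta/8<s+\eta/4$, contradicting the choice of $\cu M_W$.

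Now $G\le F$ on the compact (hence normal) space $S_{V\cap W}$, with $G$ u.s.c.\ and $F$ l.s.c., so the Kat\v{e}tov--Tong insertion theorem produces a continuous $h\colon S_{V\cap W}\to[0,1]$ with $G\le h\le F$. The $V\cap W$-sentences form a subset of $C(S_{V\cap W},[0,1])$ that contains every constant, separates types, and is closed under composition with continuous $[0,1]^n\to[0,1]$ connectives; by Stone--Weierstrass this subset is uniformly dense, so pick a $V\cap W$-sentence $\tilde\theta$ with $\|\tilde\theta-h\|_\infty\le\varepsilon/2$ and set $\theta=\max(0,\tilde\theta-\varepsilon/2)$. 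Pointwise, $\tilde\theta\le h+\varepsilon/2\le F+\varepsilon/2$ together with $F\ge 0$ yields $\theta\le F$, while $\tilde\theta\ge h-\varepsilon/2\ge G-\varepsilon/2$ yields $\theta\ge G-\varepsilon$. In a $V$-structure $\cu N$ this reads $\theta^{\cu N}\le F(\tp_{V\cap W}(\cu N))\le\varphi^{\cu N}$, so $\models\varphi\ge\theta$; in a $W$-structure, $\theta^{\cu N}\ge G(\tp_{V\cap W}(\cu N))-\varepsilon\ge\psi^{\cu N}-\varepsilon$, so $\models\theta\ge\psi-\varepsilon$, making $\theta$ the desired strong $\varepsilon$-interpolant.

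I expect the main obstacle to be the middle paragraph: Theorem~\ref{t-weak} produces a ``$=0$''-style conclusion, but for the strong version we need the numerical inequality $G\le F$. Bridging this gap rests on applying Theorem~\ref{t-weak} to a single cleverly shifted pair $(\varphi-s)^+,(\psi-s)^+$ and balancing $s$, $\eta/3$, and the tolerance $\eta/8$ so that the two halves of the weak interpolant collide in a quantitative contradiction. After that, the Kat\v{e}tov--Tong insertion and a Stone--Weierstrass approximation with a one-sided shift to convert $\|\cdot\|_\infty$-error into a $\le F$ bound are routine.
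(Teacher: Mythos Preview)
Your argument is correct, and it takes a genuinely different route from the paper's.

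The paper proves Theorem~\ref{t-strong} (as the special case $T_V=T_W=\emptyset$ of Theorem~\ref{t-strong2}) by an explicit finitary construction: fixing $\varepsilon=2^{-n}$, it applies the Weak Interpolant Theorem at each of the $2^n$ levels $k\varepsilon$ to obtain $V\cap W$-sentences $\rho_k$ that jump from $0$ to $1$ as $\cu M$ crosses the $k$-th threshold, and then combines these via the concrete aggregation function $f(\vec x)=\max_{k<2^n}\bigl[(k+1)\varepsilon\prod_{j\le k}x_j\bigr]$ to produce a single interpolant. Your proof instead works on the compact type space $S_{V\cap W}$: you push $\varphi$ and $\psi$ forward to semicontinuous envelopes $F$ and $G$, invoke Robinson Consistency (packaged through Theorem~\ref{t-weak}) to show $G\le F$, insert a continuous $h$ via Kat\v{e}tov--Tong, and then realize $h$ up to $\varepsilon/2$ by a sentence using Stone--Weierstrass density of $V\cap W$-sentences in $C(S_{V\cap W})$.

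The paper's approach is more elementary and self-contained---it never leaves the syntax of continuous logic and needs no external topological insertion theorem---and it yields an explicit interpolant built from finitely many weak interpolants by a named connective. Your approach is softer and more conceptual: it isolates the heart of the matter as a sandwich problem between semicontinuous functions on a compact space, and would transfer readily to any setting where the type space is compact and sentences are dense in continuous functions. One minor remark: in your middle paragraph the detour through Theorem~\ref{t-weak} is not really needed; once you have $\cu M_V$ and $\cu M_W$ sharing the complete $V\cap W$-theory $q$, the continuous Robinson Consistency Theorem directly amalgamates them into a single $V\cup W$-structure $\cu N$ with $\varphi^{\cu N}<s<\psi^{\cu N}$, contradicting $\models\varphi\ge\psi$. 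This gives $G\le F$ in one line.
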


Theorems \ref{t-weak} and \ref{t-strong} can be compared with two interpolation theorems
for linear continuous logic in [Ba] (2014).  Linear continuous logic is like the continuous logic of [BBHU] except that
the only connectives are linear functions, and the space of  truth values is $\BR$ rather than $[0,1]$ (see [BM] (2023)).
The statements of Theorems \ref{t-weak} and \ref{t-strong} are similar to those of Propositions 6.7 and 6.8 in [Ba].
[Ba] gives direct proofs of both of Proposition 6.7 and 6.8 that resemble the model theoretic proof of the first order Craig interpolation theorem.

In the present setting of metric structures,
the proof of the Weak Interpolant Theorem \ref{t-weak} again resembles the model theoretic proof of the first order Craig interpolation theorem.
But the proof of the Strong Interpolant Theorem \ref{t-strong} is more difficult and
uses the Weak Interpolant Theorem.

Before going on, we note that in the other direction, the Weak Interpolant Theorem  is an easy consequence
of the Strong Interpolant Theorem, because $\models\varphi\ge\psi$ trivially implies $\varphi=0\models\psi=0$.
Thus every strong $\varepsilon$-interpolant of $\varphi$ and $\psi$
is a weak $\varepsilon$-interpolant of $\varphi$ and $\psi$.

The following corollary of the Weak Interpolant Theorem shows that  except for extreme cases, the notion of a strong
interpolant is strictly stronger that the notion of a weak interpolant.

\begin{cor}  \label{c-interp}
Suppose $\models\varphi\ge\psi$, $\varepsilon>0$, and there is a metric structure $\cu M$ with vocabulary  $V\cup W$
such that either $\varepsilon<\psi^{\cu M}$ or $\varphi^{\cu M}<1$.
Then there is a weak $\varepsilon$-interpolant of $\varphi$ and $\psi$
 that is not a strong $\varepsilon$-interpolant of $\varphi$ and $\psi$.
\end{cor}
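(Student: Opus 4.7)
The plan is to start with a weak $\varepsilon$-interpolant $\theta_0$ produced by Theorem \ref{t-weak} (applicable since $\models\varphi\ge\psi$ trivially gives $\varphi=0\models\psi=0$) and reshape its values via a continuous connective so that the result $\theta$ is still a weak $\varepsilon$-interpolant but fails one of the two strong-interpolant inequalities at the given $\cu M$. The observation driving everything is that if $f\colon[0,1]\to[0,1]$ is continuous with $f(0)=0$ and $f(x)>0$ for $x>0$, then applying $f$ as a connective to $\theta_0$ yields a $V\cap W$-sentence with the same zero set as $\theta_0$, which therefore inherits both implications defining a weak $\varepsilon$-interpolant. Thus I have complete freedom to rescale values of $\theta_0$ away from its zero set.

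The argument then splits on which disjunct of the hypothesis is witnessed at $\cu M$. If $\psi^{\cu M}>\varepsilon$, the contrapositive of $\theta_0=0\models\psi\le\varepsilon$ forces $\theta_0^{\cu M}>0$; fixing $n$ with $2^{-n}<\psi^{\cu M}-\varepsilon$ and setting $\theta:=\theta_0/2^n$ yields a weak $\varepsilon$-interpolant with $\theta^{\cu M}\le 2^{-n}<\psi^{\cu M}-\varepsilon$, falsifying $\models\theta\ge\psi-\varepsilon$. If instead $\varphi^{\cu M}<1$, the aim is to falsify $\models\varphi\ge\theta$ by arranging $\theta^{\cu M}>\varphi^{\cu M}$. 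When $\varphi$ has no model at all, the constant sentence $\theta:=1$ is vacuously a weak $\varepsilon$-interpolant and $1>\varphi^{\cu M}$ finishes the job; when $\varphi$ is satisfiable, I would take $\theta:=\min(1,k\theta_0)$ for $k>1/\theta_0^{\cu M}$, which keeps $\theta$ a weak $\varepsilon$-interpolant by the observation above and pushes $\theta^{\cu M}$ up to $1$.

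The main obstacle lies in the satisfiable sub-case of $\varphi^{\cu M}<1$: the construction requires a weak $\varepsilon$-interpolant $\theta_0$ with $\theta_0^{\cu M}>0$, while a priori the interpolant handed over by Theorem \ref{t-weak} might vanish at $\cu M$. To secure positivity I would instead produce $\theta_0$ by applying Theorem \ref{t-strong} at a parameter $\varepsilon'\in(0,\varepsilon)$: the resulting strong $\varepsilon'$-interpolant is automatically a weak $\varepsilon$-interpolant, and the inequality $(\theta_0)^{\cu M}\ge\psi^{\cu M}-\varepsilon'$ gives $(\theta_0)^{\cu M}>0$ whenever $\psi^{\cu M}>\varepsilon'$, which holds for small $\varepsilon'$ unless $\psi^{\cu M}=0$. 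The residual case $\psi^{\cu M}=0$ is the most delicate and would need a dedicated argument exploiting that $\cu M$ is not a model of $\varphi=0$ to produce a non-trivial common-vocabulary sentence separating $\cu M$ from those models.
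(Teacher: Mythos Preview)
Your plan is more elaborate than the paper's and leaves a case unresolved. The single idea you are missing is a bootstrap: after obtaining a weak $\varepsilon$-interpolant $\theta$ from Theorem~\ref{t-weak}, the paper first notes that if $\theta$ is already not a strong $\varepsilon$-interpolant there is nothing to do, and otherwise \emph{assumes} that $\theta$ is a strong $\varepsilon$-interpolant. That assumption immediately supplies the bounds you work hard to secure. If $\varepsilon<\psi^{\cu M}$, strength gives $\theta^{\cu M}\ge\psi^{\cu M}-\varepsilon>0$, so one picks a strictly increasing continuous $C\colon[0,1]\to[0,1]$ with $C(0)=0$ and $0<C(\theta^{\cu M})<\psi^{\cu M}-\varepsilon$; then $C(\theta)$ is a weak but not strong $\varepsilon$-interpolant. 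If $\varphi^{\cu M}<1$, strength gives $\theta^{\cu M}\le\varphi^{\cu M}<1$, and one picks $C$ with $C(\theta^{\cu M})>\varphi^{\cu M}$. In particular no appeal to Theorem~\ref{t-strong} is needed; the paper explicitly frames this result as a corollary of the Weak Interpolant Theorem alone, so invoking the Strong Interpolant Theorem to manufacture an interpolant with $\theta_0^{\cu M}>0$ runs against the point of the corollary.

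Your residual case ($\varphi^{\cu M}<1$, $\varphi$ satisfiable, $\psi^{\cu M}=0$) is a genuine gap, and the sketch you offer for it is flawed: ``exploiting that $\cu M$ is not a model of $\varphi=0$'' is unjustified, since $\varphi^{\cu M}<1$ is perfectly compatible with $\varphi^{\cu M}=0$. The paper's bootstrap sidesteps your entire sub-case analysis in the $\varphi^{\cu M}<1$ branch, because once $\theta$ is assumed to be a strong interpolant one reshapes $\theta$ directly via $C$ rather than hunting for a starting interpolant with a prescribed positive value at $\cu M$.
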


\begin{proof}  By the Weak Interpolant Theorem, there is a weak $\varepsilon$-interpolant $\theta$ (of $\varphi$ and $\psi$).
If $\theta$ is already not a strong $\varepsilon$-interpolant, we are done.
Assume instead that $\theta$ is a strong $\varepsilon$-interpolant.  Let $C$ be a strictly increasing continuous
function from $[0,1]$ into $[0,1]$ such that $C(0)=0$.  Since $\theta$ is a weak $\varepsilon$-interpolant,
the sentence $C(\theta)$ is also a weak $\varepsilon$-interpolant.

Case 1:  $\varepsilon<\psi^{\cu M}$.   Since $\theta$ is a strong $\varepsilon$-interpolant,
$\theta^{\cu M}\ge\psi^{\cu M}-\varepsilon>0$.  Then we may take $C$ such that  $0<C(\theta^{\cu M})<\psi^{\cu M}-\varepsilon$,
so the sentence $C(\theta)$ is not a strong $\varepsilon$-interpolant.

Case 2:  $\varphi^{\cu M}<1$.  Since $\theta$ is a strong $\varepsilon$-interpolant, $1> \varphi^{\cu M}\ge\theta^{\cu M}$.
Then we may take $C$ such that $1>C(\theta^{\cu M})>\varphi^{\cu M}$, so again
the sentence $C(\theta)$ is not a strong $\varepsilon$-interpolant.
\end{proof}

It is instructive to compare weak and strong interpolants in the case that
$\models \varphi\ge\psi.$
A strong $\varepsilon$-interpolant  of $\varphi$ and $\psi$  is a $V\cap W$-sentence $\theta$
such that for each $r\in[0,1]$ and each $\cu M$ we have
$$\varphi^{\cu M}\le r\Rightarrow \theta^{\cu M}\le r,\quad\theta^{\cu M}\le r\Rightarrow (\psi^{\cu M}-\varepsilon)\le r .$$
Thus $\theta^{\cu M}$ is always between $\varphi^{\cu M}$ and $\psi^{\cu M}-\varepsilon$,
so ``almost'' between $\varphi^{\cu M}$ and $\psi^{\cu M}$.
For each  $r$, the Weak Interpolant Theorem for the sentences\footnote{$\varphi\dotminus r$ denotes the sentence $\max(\varphi-r,0)$.}
  $\varphi\dotminus r$ and $\psi\dotminus r$ gives us
a $V\cap W$-sentence $\theta_r$ (that may depend on $r$) such that for each $\cu M$,
$$\varphi^{\cu M}\le r\Rightarrow \theta_r^{\cu M}\le r,\quad\theta_r^{\cu M}\le r\Rightarrow (\psi^{\cu M}-\varepsilon)\le r .$$
To prove the Strong Interpolant Theorem, we will have to construct a sentence $\theta_r$ that does not depend on $r$.

This is illustrated in Figure 1.  The horizontal axis represents the class of all metric structures $\cu M$ with vocabulary $V\cup W$,
and the vertical axis represents the set of all $r\in[0,1]$.
The upper curve may be regarded as the ``graph'' of the function $\cu M\mapsto\varphi^{\cu M}$, and similarly for the other two curves.
\footnote{The picture is intended to illustrate the idea, but there are actually too many models to fit on a real line and the ``curves'' can be very wild.}

The region below the upper curve is the class of pairs $(\cu M,r)$ where $r\le\varphi^{\cu M}$, and the region below
the lower curve is the class of pairs $(\cu M,r)$ where $r\le(\psi^{\cu M}-\varepsilon)$.
The region below the middle curve is  the class of pairs $(\cu M,r)$ where $r\le\theta^{\cu M}$ for strong interpolants
and  $r\le\theta_r^{\cu M}$ for weak interpolants.

\begin{figure}
\begin{center}
\setlength{\unitlength}{1mm}
\begin{picture}(150,100)(0,10)
\put(10,15){\line(1,0){80}}
\put(10,90){\line(1,0){80}}
\put(10,15){\line(0,1){75}}
\put(90,15){\line(0,1){75}}

\qbezier(10,20),(50,20),(90,50)
\qbezier(10,30),(50,30),(90,70)
\qbezier(10,40),(50,45),(90,80)
\put(5,15){\makebox(0,0){$0$}}
\put(60,10){\makebox(0,0){$\cu M$}}
\put(5,60){\makebox(0,0){$r$}}

\put(5,90){\makebox(0,0){$1$}}
\put(110,50){\makebox(10,0)[r]{$(\psi^{\cu M}-\varepsilon)=r$}}
\put(110,70){\makebox(5,0)[c]{$\theta^{\cu M}=r$  or $\theta_r^{\cu M}=r $}}
\put(110,80){\makebox(0,0)[r]{$\varphi^{\cu M}=r$}}
\end{picture}
\end{center}
\caption{Interpolation when $\models \varphi\ge\psi.$}
\end{figure}
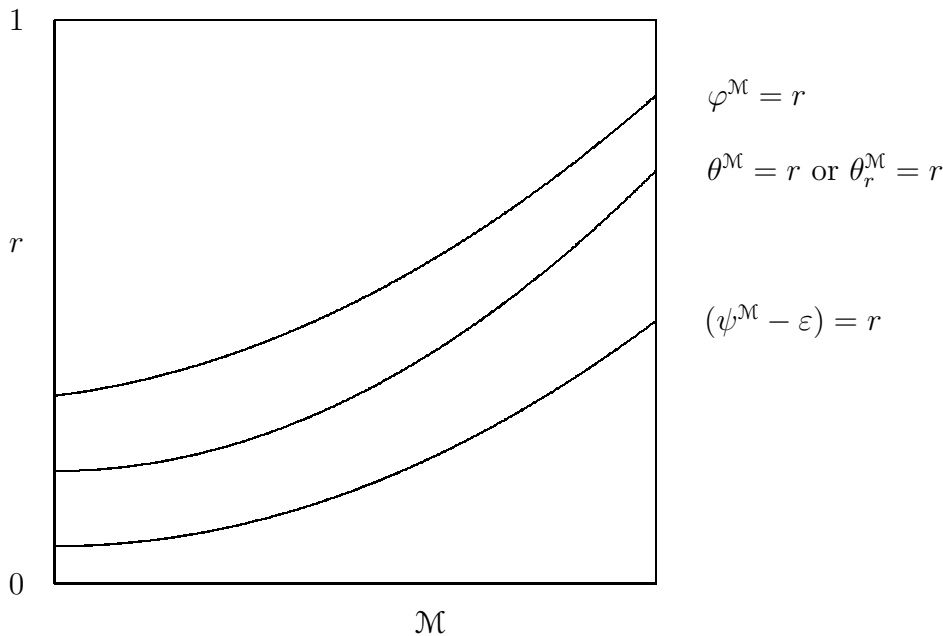

Our proof of the Strong Interpolant Theorem.will go roughly as follows.
Let $\varepsilon=2^{-n}$ for some  $n$.
Let $\mathbb D_n$ be the set of all multiples of $\varepsilon$ in $[0,1)$
(so $\mathbb D_n$ has cardinality $2^n$).
For each $r\in\mathbb D_n$, use the Weak Interpolant Theorem to get
the sentence $\theta_r$ defined in the above paragraphs.
 The hard part of the proof will be to
find a continuous function $C$ with $2^n$ variables  such that the sentence $C(\langle\theta_r\rangle_{r\in\mathbb D_n})$
is a strong $\varepsilon$-interpolant of $\varphi$ and $\psi$.

As mentioned above, the  paper [BYP] stated that
``continuous first-order logic satisfies a suitably phrased form of Craig's
interpolation theorem.''
When asked what a suitable form of the Craig Interpolation Theorem for continuous logic  would be,  Ben Yaacov [BY] (2022)
proposed the following, which  in the present setting is easily seen to be equivalent to the Weak Interpolant Theorem above
(and to Corollary \ref{c-separation} below).
\medskip

\emph{If $\{\varphi,\psi\}$ is inconsistent then there is a sentence $\theta$ in the common vocabulary such that $\{\varphi,\theta\dotminus 1/2\}$
is inconsistent and $\{\psi,1/2\dotminus \theta\}$ is inconsistent.}
\smallskip

Ben Yaacov [BY] pointed out that the above statement implies that a  uniform limit of sentences in the common vocabulary could serve as an interpolant.
This is also equivalent to the Weak Interpolant Theorem (see Corollary \ref{c-sequence} below).

In Section 2 we will prove an  analogue of the Robinson Consistency Theorem for metric structures
 (see Theorem \ref{t-robinson} below)
using results in the literature about saturated and special metric structures.
In Section 3 we will prove the Weak Interpolant Theorem from the continuous Robinson Consistency Theorem.
In Section 4 we will prove the Strong  Interpolant  Theorem from the Weak Interpolant Theorem,
using an argument that does not have a first order counterpart.

I thank Ita\"i Ben Yaacov and Jeffrey M. Keisler for helpful discussions related to this paper.

\section{Robinson Consistency Theorem}  \label{s-prelim}

We first recall some basic notation and facts  about metric structures that will be needed in this and the following sections.  We will then
prove a continuous analogue of the Robinson Consistency Theorem.

By a $V$-\emph{theory} we mean a set of $V$-sentences.  We say that $\cu M$ is a \emph{metric model} of $T$,
in symbols $\cu M\models T$,  if $\cu M$ is a metric structure, $T$ is a $V$-theory where $V$ is the vocabulary of $\cu M$,
 and every $\theta\in T$ is true (has truth value $0$) in $\cu M$.
For  $V$-theories $T$ and $U$,  $T\models U$ means that every metric model of $T$ is a metric model of $U$.
We write $T\models\varphi\ge\theta$ if $\varphi^{\cu M}\ge\theta^{\cu M}$ for every metric model $\cu M\models T$  (similarly for $\le$ and $=$).
Thus $T\models \{\varphi\}$ if and only if $T\models \varphi=0$.
Note that to the right of the $\models$ symbol we can have either a set of sentences or an inequality or equation between a pair of sentences.

$T$ is \emph{consistent} if $T$ has at least one metric model, and
$T$ is \emph{inconsistent} if $T$ has no metric models.
If $\cu M$ is a metric structure, or even just a $[0,1]$-valued structure, with vocabulary $V$,
the \emph{theory of} $\cu M$ is the set $\Th(\cu M)$ of all $V$-sentences true in $\cu M$.

Let   $\cu N$ be a metric structure with vocabulary $V\cup W$.
The $V$-\emph{part} of $\cu N$ is the  metric structure $\cu M=\cu N\upharpoonright V$ with vocabulary $V$  that agrees with $\cu N$ on all symbols of $V$,
and $\cu N$ is called an \emph{expansion} of $\cu M$ to $V\cup W$.  Note that if $T$ is a $V$-theory, then $T$ is also a $V\cup W$-theory,
and that $\cu N\models T$  if and only if $\cu N\upharpoonright V\models T$ .

For  $V$-sentences $\varphi$ and $\theta$,  $\varphi\dotle\theta$ (also written $\varphi\dotminus\theta$) denotes the $V$-sentence
$\max(\varphi-\theta,0)$.  The sentence $\varphi\dotle\theta$ is useful because in every metric structure $\cu M$ we have
$$(\varphi\dotle\theta)^{\cu M}=0 \mbox{ if and only if } \varphi^{\cu M}\le\theta^{\cu M}.$$
We will use either $\dotle$ or $\dotminus$, whichever seems more natural in each case.
Also,  $\varphi\dotge\theta$ denotes the $V$-sentence $\theta\dotle\varphi$, and
$\varphi\dotplus \theta$ denotes the $V$-sentence $\min(\varphi +\theta,1)$.

We will repeatedly use the following fact and its corollary.

\begin{fact}  \label{f-compactness}  (Compactness Theorem, by Theorem 5.8 of [BBHU])
If every finite subset of a $V$-theory  $T$ is consistent, then $T$ is consistent.
\end{fact}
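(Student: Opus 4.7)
The plan is to use the metric ultraproduct construction, which is the standard tool for compactness in continuous logic. Suppose $T$ is a $V$-theory every finite subset of which is consistent. For each finite $T_0 \subseteq T$ I would choose a metric model $\cu M_{T_0} \models T_0$. Let $I$ denote the collection of finite subsets of $T$, and for each $T_0 \in I$ set $S_{T_0} = \{T' \in I : T_0 \subseteq T'\}$. Since $S_{T_0} \cap S_{T_1} \supseteq S_{T_0 \cup T_1}$, the family $\{S_{T_0} : T_0 \in I\}$ has the finite intersection property, so there is an ultrafilter $\cu U$ on $I$ that contains every $S_{T_0}$.

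Next I would form the metric ultraproduct $\cu N = \bigl(\prod_{T_0 \in I} \cu M_{T_0}\bigr)/\cu U$. One first takes the algebraic product as a $[0,1]$-valued structure, defines the pseudometric $d_{\cu U}(a,b) = \lim_{\cu U} d(a_{T_0}, b_{T_0})$ (the limit exists since $[0,1]$ is compact), and then passes to equivalence classes under $d_{\cu U}(a,b) = 0$. Each predicate symbol $P \in V$ is interpreted on the quotient by $P^{\cu N}([a]) = \lim_{\cu U} P^{\cu M_{T_0}}(a_{T_0})$, and function symbols are interpreted analogously on representatives. That these interpretations are well defined on equivalence classes, and themselves respect the moduli of uniform continuity prescribed by $\BL$, follows from the fact that each $\cu M_{T_0}$ does.

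The key ingredient is the continuous analogue of \L o\'s's theorem: for every $V$-sentence $\varphi$ one has $\varphi^{\cu N} = \lim_{\cu U} \varphi^{\cu M_{T_0}}$, proved by induction on the structure of $\varphi$ using continuity of the connectives and an argument for the $\sup$ and $\inf$ quantifiers using saturation of the ultrapower. Then for each $\varphi \in T$ the set $S_{\{\varphi\}}$ lies in $\cu U$ and $\varphi^{\cu M_{T_0}} = 0$ on that set, so $\varphi^{\cu N} = 0$; hence $\cu N \models T$.

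The main obstacle — and the reason this is invoked as a fact rather than reproved — lies in the technicalities of the metric ultraproduct: verifying that $d_{\cu U}$ induces a genuine \emph{complete} metric on the quotient, that all function symbols of $\BL$ respect $d_{\cu U}$ with the prescribed moduli, and that the quantifier step in \L o\'s's theorem goes through (which ultimately relies on countable saturation of ultraproducts along a non-principal ultrafilter). All of this is carried out in detail in Section 5 of [BBHU], so here the result is simply quoted.
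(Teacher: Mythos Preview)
The paper gives no proof of this statement at all; it is recorded as a \emph{Fact} with a citation to Theorem~5.8 of [BBHU], so there is nothing to compare against beyond the observation that your outline is indeed the standard ultraproduct argument carried out in that reference.

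Your sketch is essentially correct, but there is one conceptual slip worth flagging. You twice say that the quantifier step in \L o\'s's theorem relies on (countable) saturation of the ultraproduct. This is backwards: the $\sup/\inf$ clause of the continuous \L o\'s theorem is proved directly by choosing, for each index $T_0$, a near-witness $a_{T_0}$ to the supremum in $\cu M_{T_0}$ and forming the element $[a]$ in the ultraproduct. No saturation is needed or available at that stage; rather, countable saturation of metric ultraproducts is a \emph{consequence} of \L o\'s's theorem (together with $\aleph_1$-completeness of the ultrafilter in the appropriate sense), not an ingredient in its proof. Apart from that reversal, your description of the construction and of the technical points requiring care (well-definedness on $d_{\cu U}$-classes, preservation of the moduli from $\BL$, completeness of the quotient metric) is accurate.
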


\begin{cor}  \label{c-compact}  (Compactness Corollary)
$T\cup\{\varphi\}$ is inconsistent if and only if   there exists $r\in(0,1]$ such that
 $T\models r\le \varphi$.
\end{cor}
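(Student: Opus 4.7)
The plan is to prove the two directions separately. The backward direction ($\Leftarrow$) follows directly from the definitions: if $r\in(0,1]$ and $T\models r\le \varphi$, then for any metric model $\cu M\models T$ we would have $\varphi^{\cu M}\ge r>0$, so $\cu M$ cannot satisfy $\varphi$ (which would require $\varphi^{\cu M}=0$). Hence $T\cup\{\varphi\}$ admits no metric model.

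For the forward direction ($\Rightarrow$), I would argue by contrapositive, using the Compactness Theorem (Fact \ref{f-compactness}). Assume that for every $r\in(0,1]$ we have $T\not\models r\le\varphi$; in particular, for each integer $n\ge 1$ there is a metric model $\cu M_n\models T$ with $\varphi^{\cu M_n}<1/n$. I would then introduce the auxiliary $V$-theory
\[
T' = T\cup\{\varphi\dotminus 1/n : n\ge 1\},
\]
whose new sentences say precisely that $\varphi\le 1/n$. Any finite subset of $T'$ lies in $T\cup\{\varphi\dotminus 1/k : k\le N\}$ for some $N$, and $\cu M_N$ witnesses its consistency, since $\varphi^{\cu M_N}<1/N\le 1/k$ gives $(\varphi\dotminus 1/k)^{\cu M_N}=0$ for all $k\le N$. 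By Fact \ref{f-compactness}, $T'$ itself has a metric model $\cu N$. In $\cu N$ we have $\varphi^{\cu N}\le 1/n$ for every $n$, so $\varphi^{\cu N}=0$, i.e.\ $\cu N\models T\cup\{\varphi\}$, showing that $T\cup\{\varphi\}$ is consistent.

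The argument is routine once the right auxiliary theory is identified, and there is no real obstacle. The one conceptual point worth noting is that the discrete truth condition $\varphi^{\cu N}=0$ is captured in continuous logic by the countable family of quantitative conditions $\varphi^{\cu N}\le 1/n$, and this is exactly the translation that lets compactness convert a quantitative lower bound into the classical satisfaction relation.
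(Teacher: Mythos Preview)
Your proof is correct and follows essentially the same route as the paper: both directions are handled the same way, with the forward direction argued by contrapositive via the auxiliary theory $T\cup\{\varphi\dotle r\}$ and compactness. The only cosmetic difference is that you index the auxiliary sentences by $r=1/n$ while the paper uses all $r\in(0,1]$, which makes no material difference.
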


\begin{proof}  If there exists $r\in(0,1]$ such that $T\models r\le\varphi$, it trivially follows that $T\cup\{\varphi\}$ has no metric models
and hence is inconsistent.

Suppose there is no $r\in(0,1]$ such that $T\models r\le\varphi$.
Then for each $r\in(0,1]$ there is a metric model $\cu M$ of $T$ such that $(r\dotle\varphi)^{\cu M}>0$,
so $r>\varphi^{\cu M}$,  $\varphi^{\cu M}\le r$, and $(\varphi\dotle r)^{\cu M}=0$.
Then every finite subset of the theory $U=T\cup\{\varphi\dotle r\colon r\in(0,1]\}$ is consistent.
By the Compactness Theorem, $U$ is consistent, and thus has a metric model $\cu N$.
Then $(\varphi\dotle r)^{\cu N}=0$ for each $r\in(0,1]$, so $\varphi^{\cu N}=0$ and $\cu N\models T\cup\{\varphi\}$.
Therefore $T\cup\{\varphi\}$ is not inconsistent.
\end{proof}

Two metric structures $\cu M, \cu N$ with vocabulary $V$ are \emph{isomorphic}, in symbols $\cu M\cong\cu N$, if there is a bijection
from the universe of $\cu M$ onto the universe of $\cu N$
that preserves the truth value of all atomic formulas.

\begin{fact}  If $\cu M\cong\cu N$ then $\varphi^{\cu M}=\varphi^{\cu N}$ for every $V$-sentence $\varphi$.
\end{fact}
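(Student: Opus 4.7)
The plan is to prove, by induction on the complexity of $V$-formulas $\varphi(x_1,\ldots,x_n)$, the more general statement that for every isomorphism $f\colon\cu M\to\cu N$ and every tuple $\bar a$ from the universe of $\cu M$ one has $\varphi^{\cu M}(\bar a)=\varphi^{\cu N}(f(\bar a))$. The Fact then follows by taking $n=0$.

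Before starting the induction I would first verify that the stated definition of isomorphism forces $f$ to commute with function symbols. For a function symbol $g\in V$ and a tuple $\bar a$ from $\cu M$, the atomic formula $d(g(\bar x),y)$ evaluated at $(\bar a,g^{\cu M}(\bar a))$ has value $0$; preservation of atomic truth values then gives $d^{\cu N}(g^{\cu N}(f(\bar a)),f(g^{\cu M}(\bar a)))=0$, and since $d^{\cu N}$ is a genuine metric this forces $f(g^{\cu M}(\bar a))=g^{\cu N}(f(\bar a))$. A routine induction on the structure of terms then yields $f(t^{\cu M}(\bar a))=t^{\cu N}(f(\bar a))$ for every $V$-term $t$, and combined with preservation of predicate symbols this settles the base case of the main induction (atomic formulas of the form $P(t_1,\ldots,t_k)$ and $d(t_1,t_2)$).

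For the inductive steps, if $\varphi=C(\varphi_1,\ldots,\varphi_k)$ for a continuous connective $C\colon[0,1]^k\to[0,1]$, then by the inductive hypothesis the tuples $(\varphi_i^{\cu M}(\bar a))_i$ and $(\varphi_i^{\cu N}(f(\bar a)))_i$ agree coordinatewise, so applying $C$ to both gives $\varphi^{\cu M}(\bar a)=\varphi^{\cu N}(f(\bar a))$. For a quantifier $\varphi(\bar x)=\sup_y\psi(\bar x,y)$, bijectivity of $f$ gives
\[
\varphi^{\cu M}(\bar a)=\sup_{b\in\cu M}\psi^{\cu M}(\bar a,b)=\sup_{b\in\cu M}\psi^{\cu N}(f(\bar a),f(b))=\sup_{b'\in\cu N}\psi^{\cu N}(f(\bar a),b')=\varphi^{\cu N}(f(\bar a)),
\]
where the second equality uses the inductive hypothesis and the third uses that $b\mapsto f(b)$ is a bijection onto the universe of $\cu N$; the $\inf$ case is identical.

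There is no substantive obstacle: the only slightly non-obvious step is observing that preservation of atomic formulas involving $d$, together with the fact that $d^{\cu N}$ is a genuine metric, already forces $f$ to commute with every function symbol in $V$. Once that is in place, the remaining induction over connectives and quantifiers is pure bookkeeping.
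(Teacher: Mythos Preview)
Your argument is correct and is exactly the standard induction one would expect; the paper itself states this fact without proof, treating it as routine background. One minor remark: since the paper's definition of isomorphism already hypothesizes preservation of the truth value of \emph{all} atomic formulas (not merely of predicate symbols applied to variables), the base case is literally given by hypothesis, and your detour through commutation with function symbols and induction on terms, while perfectly valid and arguably more informative, is not strictly needed.
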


A cardinal $\kappa$ is \emph{special} if $2^\lambda\le\kappa$ for all $\lambda\le\kappa$
(for example, $\beth_\omega$ is special).
A metric structure $\cu M$ is $\kappa$-\emph{special} if $\kappa$ is an uncountable special cardinal, $|M|\le\kappa$, and
$\cu M$ is the union of an elementary chain of metric structures $\langle \cu M_\lambda\colon\aleph_0\le\lambda<\kappa\rangle$
such that each $\cu M_\lambda$  is $\lambda^+$-saturated.

\begin{fact}  \label{f-special}  (Facts 2.4.6 and 2.4.8 in [Ke].)  Suppose $T$ is a $V$-theory, $\kappa$ is special,
and $\kappa\ge|V|+\aleph_0$.
If $T$ is consistent then $T$ has a $\kappa$-special model with vocabulary $V$.
If $T$ is complete  then up to isomorphism
$T$ has a unique $\kappa$-special metric model with vocabulary $V$.
\end{fact}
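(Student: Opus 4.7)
The plan is to follow the classical strategy for existence and uniqueness of special models, adapted to the metric-structure setting. The two claims split naturally: existence requires building a long elementary chain of progressively more saturated extensions whose cardinalities stay $\le\kappa$, while uniqueness (for complete $T$) requires a back-and-forth argument along those chains.

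For existence, I would proceed by transfinite recursion on cardinals $\aleph_0\le\lambda<\kappa$, constructing an elementary chain $\langle\cu M_\lambda\rangle$ of metric models of $T$ with each $\cu M_\lambda$ of cardinality $\le\kappa$ and $\lambda^+$-saturated.  Starting from an initial $\cu M_{\aleph_0}\models T$ of size $\le|V|+\aleph_0\le\kappa$ obtained from the continuous Compactness Theorem together with a L\"owenheim--Skolem argument, I would pass from one stage to the next by iteratively realizing all metric types over subsets of the appropriate cardinality and taking metric completions so as to remain inside the class of metric structures.  The standard bookkeeping gives size bound $\le 2^\lambda$, which is $\le\kappa$ precisely because $\kappa$ is special.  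At limit stages one takes the metric completion of the union of the chain; elementarity is preserved because the symbols of $V$ obey the uniform continuity moduli prescribed by $\BL$.  The final structure $\cu M=\bigcup_\lambda\cu M_\lambda$ is then $\kappa$-special of size $\le\kappa$.

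For uniqueness, let $\cu M$ and $\cu N$ be two $\kappa$-special models of the complete theory $T$, presented by chains $\langle\cu M_\lambda\rangle$ and $\langle\cu N_\lambda\rangle$.  I would fix dense subsets of size $\le\kappa$ on each side and enumerate them by ordinals $<\kappa$, then construct a bijection between these dense subsets by a back-and-forth of length $\kappa$.  At stage $\alpha$, a partial elementary map involving fewer than $\lambda^+$ points is defined inside $\cu M_\lambda$ and $\cu N_\lambda$ for some appropriate $\lambda$, and the $\lambda^+$-saturation of both sides lets us extend the map by one element each way, raising $\lambda$ when necessary.  The resulting elementary bijection on dense subsets extends uniquely, by the uniform continuity moduli of $\BL$ (in particular of $d$), to an isometric isomorphism between the metric completions $\cu M$ and $\cu N$.

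I expect the main obstacle to be the interplay between saturation and metric completion rather than the combinatorial skeleton.  Each type-realization step in continuous logic produces its witness only as the limit of a Cauchy sequence of approximate realizers, so one must verify that after completion (a) elementary equivalence survives, (b) the cardinality bound $2^\lambda$ is not exceeded, and (c) the uniform continuity moduli of $\BL$ remain satisfied.  Similarly, in the back-and-forth the matching lives only on dense subsets, and one has to check that uniformly continuous extension really yields a genuine isomorphism of metric structures in the sense of the fact preceding \ref{f-special}.  Once these continuous refinements are in place, the transfinite recursion and the cardinal arithmetic $2^\lambda\le\kappa$ proceed exactly as in the discrete case worked out in [Ke].
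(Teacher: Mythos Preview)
The paper does not give its own proof of this statement: it is recorded as a \emph{Fact} and simply cited from [Ke] (Facts 2.4.6 and 2.4.8 there), with no argument supplied. So there is nothing in the present paper to compare your proposal against.

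That said, your sketch is the standard route and is essentially what one finds in [Ke] (which in turn adapts the classical first-order arguments from [CK] to the $[0,1]$-valued/metric setting). The existence argument via an elementary chain $\langle\cu M_\lambda\rangle_{\aleph_0\le\lambda<\kappa}$ with $\cu M_\lambda$ of size $\le 2^\lambda\le\kappa$ and $\lambda^+$-saturated, and the uniqueness via a back-and-forth of length $\kappa$ exploiting the $\lambda^+$-saturation at each level, are exactly the expected ingredients. Your remarks about the metric-specific issues---working with dense subsets, taking completions, and checking that uniform continuity moduli and elementarity survive completion---are the right points of care, and they are handled in [Ke] in the way you anticipate. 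In short: your proposal is correct and matches the cited source, but the present paper itself offers no proof to compare with.
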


\begin{fact}  \label{f-reduct}  (Remark 2.4.4 in [Ke])
The $V$-part of a $\kappa$-special metric model with vocabulary $V\cup W$ is  a $\kappa$-special metric model with vocabulary $V$.
\end{fact}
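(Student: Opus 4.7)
The plan is to unpack the definition of $\kappa$-special and verify that each clause is preserved under the reduct $\cu N \mapsto \cu N\upharpoonright V$.  Fix a $\kappa$-special metric model $\cu N$ with vocabulary $V\cup W$, given by an elementary chain $\langle \cu N_\lambda : \aleph_0\le\lambda<\kappa\rangle$ of $\lambda^+$-saturated metric structures whose union is $\cu N$.  Set $\cu M_\lambda := \cu N_\lambda\upharpoonright V$ and $\cu M := \cu N\upharpoonright V$; the task is to show that $\langle \cu M_\lambda\rangle$ witnesses $\cu M$ being $\kappa$-special.

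Two of the three required properties are immediate.  First, since reducts share universes, $|\cu M|=|\cu N|\le\kappa$; because $d\in V$, the interpretation $d^{\cu M}=d^{\cu N}$ is a complete metric, and $\cu M$ respects the bounds in $\BL$ restricted to $V$-symbols because $\cu N$ does, so $\cu M$ is a metric structure.  Second, if $\cu N_\lambda\preceq \cu N_{\lambda'}$ as $V\cup W$-structures for $\lambda\le\lambda'$, then since every $V$-formula is in particular a $V\cup W$-formula, $\cu M_\lambda\preceq\cu M_{\lambda'}$ as $V$-structures; taking the union along the chain gives $\cu M=\bigcup_\lambda\cu M_\lambda$.

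The core step is to verify that each $\cu M_\lambda$ is $\lambda^+$-saturated.  Let $A\subseteq \cu M_\lambda$ with $|A|\le\lambda$, and let $p(x)$ be a $V$-type over $A$, that is, a set of $V$-conditions with parameters from $A$ that is finitely satisfiable in $\cu M_\lambda$.  Since $\cu M_\lambda$ and $\cu N_\lambda$ have the same underlying set and agree on the interpretation of every symbol of $V$, the very same witnesses show that $p$ is finitely satisfiable in $\cu N_\lambda$; hence $p$, viewed as a set of $V\cup W$-formulas, is a $V\cup W$-type over $A$ in $\cu N_\lambda$.  By $\lambda^+$-saturation of $\cu N_\lambda$, some $b\in \cu N_\lambda=\cu M_\lambda$ realizes $p$, and the realization transfers back to $\cu M_\lambda$ because $V$-interpretations agree.

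The main obstacle — really more a conceptual checkpoint than a difficulty — is the saturation step: one must recognize that a $V$-type over $A$ in $\cu M_\lambda$, which a priori is only asserted to be consistent with the $V$-part of the theory, is automatically consistent with the full $V\cup W$-theory of $\cu N_\lambda$.  This is exactly what the identification of universes and $V$-interpretations provides, and everything else is routine bookkeeping about reducts and ascending unions.
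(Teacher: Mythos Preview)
Your argument is correct: you unpack the definition of $\kappa$-special and check that each clause survives the reduct, with the saturation step being the only place requiring a moment's thought, and you handle that cleanly by observing that a finitely satisfiable $V$-type over $A$ in $\cu M_\lambda$ is, via the shared universe and $V$-interpretations, a finitely satisfiable $V\cup W$-type in $\cu N_\lambda$.

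As for comparison with the paper: the paper does not give a proof at all; it records the statement as a Fact and cites Remark~2.4.4 of [Ke].  So your write-up supplies what the paper omits.  The argument you give is the standard one and is presumably what lies behind the cited remark, so there is no meaningful divergence in approach to report.
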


\begin{convention}  \label{conv2}  As before, $V$ and $W$ will always denote vocabularies, $\varphi$ will always be a $V$-sentence, and
$\psi$ will always be a $W$-sentence.  We also fix a $V$-theory $T_V$ and a $W$-theory $T_W$.
\end{convention}

The following result is a continuous analogue of the Robinson Consistency Theorem.

\begin{thm}  \label{t-robinson}  Let  $T=\Th(\cu P)$ for some metric structure  $\cu P$ with vocabulary $V\cap W$.
If $T\cup T_V$ is consistent and $T\cup T_W$ is consistent, then $T\cup T_V\cup T_W$ is consistent.
\end{thm}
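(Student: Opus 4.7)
The plan is to adapt the classical back-and-forth/uniqueness argument for the first order Robinson Consistency Theorem, using the saturation/special machinery provided by Facts \ref{f-special} and \ref{f-reduct}. First I would pick a special cardinal $\kappa\ge |V\cup W|+\aleph_0$; since the class of special cardinals is cofinal in the ordinals, such a $\kappa$ exists. By hypothesis, $T\cup T_V$ and $T\cup T_W$ are consistent, so by Fact \ref{f-special} there is a $\kappa$-special metric model $\cu M\models T\cup T_V$ with vocabulary $V$ and a $\kappa$-special metric model $\cu N\models T\cup T_W$ with vocabulary $W$.

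Next, I would observe that $T=\Th(\cu P)$ is a complete $(V\cap W)$-theory in the continuous sense: given any $(V\cap W)$-sentence $\sigma$, if $r=\sigma^{\cu P}$, then the $(V\cap W)$-sentence $|\sigma-r|$ lies in $T$, and hence takes the value $0$ in every metric model of $T$, forcing $\sigma^{\cu Q}=r$ for every such model $\cu Q$. Now the reducts $\cu M\upharpoonright(V\cap W)$ and $\cu N\upharpoonright(V\cap W)$ are both metric models of $T$ with vocabulary $V\cap W$, and by Fact \ref{f-reduct} each of them is $\kappa$-special. The uniqueness clause of Fact \ref{f-special} (applied to the complete theory $T$) therefore yields an isomorphism
\[
f\colon\cu M\upharpoonright(V\cap W)\;\longrightarrow\;\cu N\upharpoonright(V\cap W).
\]

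Finally, I would use $f$ to transport the $W$-structure of $\cu N$ back to the universe of $\cu M$: define $\cu R$ to be the $(V\cup W)$-structure whose universe and $V$-interpretations are those of $\cu M$, and whose interpretation of each symbol $s\in W\setminus V$ is the pullback of $s^{\cu N}$ along $f$. Because $f$ is an isomorphism on the common vocabulary, the interpretations agree on $V\cap W$, so $\cu R$ is a well-defined metric structure, $\cu R\upharpoonright V=\cu M$, and $\cu R\upharpoonright W\cong\cu N$. Consequently $\cu R\models T_V$ (from $\cu M$), $\cu R\models T_W$ (from $\cu N$ via the isomorphism, using that isomorphic metric structures satisfy the same sentences), and $\cu R\models T$, so $\cu R$ is the desired metric model of $T\cup T_V\cup T_W$.

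The main obstacle is the step that requires uniqueness of the $\kappa$-special model of $T$, which in turn requires verifying that $T=\Th(\cu P)$ really is complete in the sense needed for Fact \ref{f-special}; the small subtlety is that in continuous logic ``complete'' has to be read as pinning down the truth value of every sentence, not merely deciding whether each sentence is ``true'' (takes value $0$). Once that is addressed via the $|\sigma-r|$ trick above, everything else reduces to combining standard facts from [Ke].
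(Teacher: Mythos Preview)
Your proposal is correct and follows essentially the same route as the paper's proof: choose a large special cardinal $\kappa$, take $\kappa$-special models of $T\cup T_V$ and $T\cup T_W$, apply Fact~\ref{f-reduct} and the uniqueness clause of Fact~\ref{f-special} to identify their $V\cap W$-reducts, and amalgamate. Your write-up is simply more explicit than the paper's---you spell out why $\Th(\cu P)$ is complete via the $|\sigma-r|$ argument and describe the transport of the $W$-structure along the isomorphism---whereas the paper leaves these as implicit; the only cosmetic point is that you should take $\kappa$ strictly greater than $|V\cup W|+\aleph_0$ (or at least uncountable) to match the definition of $\kappa$-special.
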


\begin{proof}  Let $\kappa$ be a special cardinal with  $\kappa >|V\cup W|+\aleph_0$.  By Fact \ref{f-special},   there is a $\kappa$-special
metric model $\cu M$ of $T\cup T_V$ with vocabulary $V$, and a $\kappa$-special metric
model $\cu N$ of $T\cup T_W$ with vocabulary $W$.  By Fact \ref{f-reduct},
the $V\cap W$-parts of $\cu M$ and of $\cu N$ are both metric models of $T$, and are both $\kappa$-special  with vocabulary $V\cap W$.
Since $T$ is complete, by Fact \ref{f-special}
 the $V\cap W$-parts of $\cu M$ and $\cu N$ are isomorphic.
Therefore we may take $\cu M$ and $\cu N$ to have the same $V\cap W$-parts.
Then $\cu M$ and $\cu N$ have a common expansion to a metric model of $T\cup T_V\cup T_W$.
\end{proof}

\section{Weak Interpolants} \label{s-weak}

Convention \ref{conv2}, where we fix the theories $T_V$ and $T_W$, is still in force.
Instead of proving the interpolation theorems in the simple form stated in the Introduction, it will be easier and hence better
to prove them in the more general setting where we restrict attention to the class of metric models of $T_V\cup T_W$
instead of the class of all metric structures.

The  Weak Interpolant Theorem \ref{t-weak} in the Introduction is a special case of Theorem \ref{t-weak2}
below (that special case arises when both $T_V$ and $T_W$ are  empty sets of sentences).

\begin{thm}   \label{t-weak2}  Suppose that
$$T_V\cup T_W\cup\{\varphi\}\models \psi=0.$$
Then for each $\varepsilon \in (0,1]$,
there is a $V\cap W$-sentence $\theta$ such that
$$T_V\cup\{\varphi\}\models \theta=0,\quad T_W\cup \{\theta\}\models \psi\le\varepsilon.$$
\end{thm}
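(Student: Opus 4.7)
The plan is to argue by contradiction and apply Theorem \ref{t-robinson} to an appropriately chosen complete $V\cap W$-theory. Fix $\varepsilon\in(0,1]$ and assume no weak $\varepsilon$-interpolant of $\varphi$ and $\psi$ exists. Define
$$\Theta=\{\theta : \theta \text{ is a } V\cap W\text{-sentence and } T_V\cup\{\varphi\}\models\theta=0\}.$$
Since $\max$ is a continuous connective preserving the value $0$, $\Theta$ is closed under finite $\max$. The non-interpolation assumption translates to: for every $\theta\in\Theta$ there is a metric model of $T_W\cup\{\theta\}$ in which $\psi>\varepsilon$.

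First I would use the Compactness Theorem (Fact \ref{f-compactness}) to produce a metric $W$-model $\cu N$ of $T_W\cup\Theta\cup\{\psi\dotge\varepsilon\}$. Given a finite subset involving $\theta_1,\dots,\theta_k\in\Theta$, the sentence $\sigma=\max(\theta_1,\dots,\theta_k)$ again lies in $\Theta$, and the non-interpolation assumption applied to $\sigma$ supplies a metric model of $T_W\cup\{\sigma,\psi\dotge\varepsilon\}$ in which every $\theta_i$ vanishes. With $\cu N$ in hand, set $\cu P=\cu N\upharpoonright(V\cap W)$ and $T=\Th(\cu P)$, so $T$ is a complete $V\cap W$-theory, closed under finite $\max$, and $T\cup T_W\cup\{\psi\dotge\varepsilon\}$ is consistent.

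The main step, which I expect to be the principal obstacle, is to verify that $T\cup T_V\cup\{\varphi\}$ is also consistent, so that Theorem \ref{t-robinson} can be applied with $T_V\cup\{\varphi\}$ and $T_W\cup\{\psi\dotge\varepsilon\}$ in the roles of the side theories. Suppose it were not; then Fact \ref{f-compactness} together with closure of $T$ under $\max$ would yield a single $V\cap W$-sentence $\sigma\in T$ with $T_V\cup\{\varphi,\sigma\}$ inconsistent. Corollary \ref{c-compact} then supplies $r>0$ with $T_V\cup\{\varphi\}\models r\le\sigma$, equivalently $T_V\cup\{\varphi\}\models(r\dotminus\sigma)=0$, so $r\dotminus\sigma\in\Theta$. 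But $\sigma\in T$ gives $\sigma^{\cu N}=0$, hence $(r\dotminus\sigma)^{\cu N}=r>0$, contradicting $\cu N\models\Theta$.

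With both consistency hypotheses of Theorem \ref{t-robinson} in hand, it produces a metric model $\cu Q$ of $T\cup T_V\cup T_W\cup\{\varphi,\psi\dotge\varepsilon\}$. Then $\psi^{\cu Q}\ge\varepsilon>0$, contradicting the hypothesis $T_V\cup T_W\cup\{\varphi\}\models\psi=0$, and thereby establishing the existence of the required interpolant. The delicate moment in the plan is the consistency check for $T\cup T_V\cup\{\varphi\}$, where the defining property of $\Theta$ is used against $\cu N$ via Corollary \ref{c-compact}; once that step is secured, the rest of the argument is a routine orchestration of compactness and Robinson consistency.
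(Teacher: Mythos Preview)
Your argument is correct and follows essentially the same route as the paper's proof: define the set $\Theta$ (called $U$ in the paper) of $V\cap W$-consequences of $T_V\cup\{\varphi\}$, use compactness to pass to a single model and its complete $V\cap W$-theory $T$, verify via Corollary~\ref{c-compact} that $T\cup T_V\cup\{\varphi\}$ is consistent, and then invoke Theorem~\ref{t-robinson} to reach a contradiction. The only difference is organizational---the paper first proves the $\varepsilon$-free claim $T_W\cup U\models\psi=0$ and afterwards extracts a finite interpolant by compactness, whereas you fix $\varepsilon$ at the outset and argue by contradiction---but the substance is identical.
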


\begin{proof}
Let $U$ be the set of all $V\cap W$-sentences $\rho$ such that $T_V\cup\{\varphi\}\models\rho=0$.  Then every metric model of $T_V\cup \{\varphi\}$
is a metric model of $U$. We first prove the following Claim.
\medskip

\textbf{Claim.}  $T_W\cup U\models\psi=0$.

\begin{proof} [Proof of Claim]

Suppose $\cu M$ is a metric model of $T_W\cup U$.
To show that $\psi^{\cu M}=0$, we assume that $0<\psi^{\cu M}$ and get a contradiction.
Since $0<\psi^{\cu M}$, $r\le \psi^{\cu M}$ for some $r\in(0,1]$.
 Let $\cu P$ be the $V\cap W$-part of $\cu M$ and $T=\Th(\cu P)$.
Then $\cu M$ is a metric model of $T\cup T_W\cup\{r\dotle\psi\}$, so
\begin{equation}  \label{eq-main1}
T\cup T_W\cup\{r\dotle\psi\} \mbox{ is consistent}.
\end{equation}
We prove
\begin{equation} \label{eq-main2}
T\cup T_V\cup \{\varphi\} \mbox{  is consistent}.
\end{equation}
Assume that (\ref{eq-main2}) fails.
By the Compactness Theorem,  there is
a finite subset $T_0\subseteq T$ such that
$T_0\cup T_V\cup\{\varphi\}$  is inconsistent.
Let $\rho$ be the $V\cap W$-sentence $\max(T_0)$.  Then
$\{\rho\}\cup T_V\cup\{\varphi\}$ is inconsistent.
By the Compactness Corollary, there is an $s\in(0,1]$ such that
$$T_V\cup\{\varphi\}\models s\le\rho.$$
Hence $s\dotle\rho\in U$.
We have $\cu M\models\rho=0$ because $\cu M\models T$, and $\cu M\models s\le\rho$ because $\cu M\models U$.
This is a contradiction, so (\ref{eq-main2}) holds after all.

It follows from (\ref{eq-main1}), (\ref{eq-main2}), and Theorem \ref{t-robinson} that
$$ T\cup T_V\cup T_W\cup \{\varphi, r\dotle\psi\} \mbox{ is consistent}.$$
This contradicts the hypothesis that $T_V\cup T_W\cup\{\varphi\}\models\psi=0$
in the statement of the theorem, so the assumption that $0<\psi^{\cu M}$ is false.
Thus $\psi^{\cu M }=0$ and hence
$$T_W\cup U\models\psi=0$$
as required.  This completes the proof of the Claim.
\end{proof}

 Let $\varepsilon\in (0,1]$.
Then by the Claim,
$$T_W\cup U\cup \{\varepsilon\dotle \psi\}$$
is inconsistent.
By the Compactness Theorem, there is a  finite subset $U^\varepsilon$ of $U$
such that
$$T_W\cup U^\varepsilon\cup \{\varepsilon\dotle \psi\}$$
is inconsistent.
Let $\theta:=\max(U^\varepsilon)$.  Then $\theta\in U$, so
$$T_V\cup T_W\cup\{\varphi\}\models \theta=0,\quad T_V\cup T_W\cup \{\theta\}\models \psi\le\varepsilon.$$
as required.
\end{proof}

\begin{cor}   \label{c-weak2}  Suppose that
$$T_V\cup T_W\cup\{\varphi\}\models \psi=0.$$
Then for each $\varepsilon \in (0,1]$,
there is a $V\cap W$-sentence $\rho$ such that
$$T_V\cup\{\varphi\}\models \rho\le\varepsilon,\quad T_W\cup \{\rho\dotle\varepsilon\}\models \psi\le\varepsilon.$$
\end{cor}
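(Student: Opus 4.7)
My plan is to derive the corollary directly from Theorem \ref{t-weak2} by an upward shift. First I would apply Theorem \ref{t-weak2} to the given $\varepsilon$ to obtain a $V\cap W$-sentence $\theta$ with $T_V\cup\{\varphi\}\models\theta=0$ and $T_W\cup\{\theta\}\models\psi\le\varepsilon$. Then I would set $\rho := \theta\dotplus\varepsilon$, that is, $\rho=\min(\theta+\varepsilon,1)$, which is again a $V\cap W$-sentence.

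The first required inequality, $T_V\cup\{\varphi\}\models\rho\le\varepsilon$, is immediate: any $\cu M\models T_V\cup\{\varphi\}$ has $\theta^{\cu M}=0$, and so $\rho^{\cu M}=\min(\varepsilon,1)=\varepsilon$.

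For the second, $T_W\cup\{\rho\dotle\varepsilon\}\models\psi\le\varepsilon$, my plan is to show that the hypothesis $\rho^{\cu N}\le\varepsilon$ forces $\theta^{\cu N}=0$; Theorem \ref{t-weak2} then supplies $\psi^{\cu N}\le\varepsilon$. Suppose $\cu N\models T_W$ with $\rho^{\cu N}\le\varepsilon$. The case $\varepsilon=1$ is trivial because $\psi^{\cu N}\le 1$ holds automatically. If $\varepsilon<1$, then $\rho^{\cu N}\le\varepsilon<1$, so the $\min$ in the definition of $\rho$ is not attained by $1$; hence $\theta^{\cu N}+\varepsilon=\rho^{\cu N}\le\varepsilon$, giving $\theta^{\cu N}=0$, as desired.

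The only conceptual step is noticing that the shift $\theta\mapsto\theta\dotplus\varepsilon$ makes the sharp condition ``$\theta=0$'' equivalent to the fuzzy condition ``$\rho\le\varepsilon$'' (aside from the harmless cap at $1$, which is only relevant in the trivial case $\varepsilon=1$). I do not anticipate any serious obstacle; the corollary amounts to a short rearrangement of Theorem \ref{t-weak2}.
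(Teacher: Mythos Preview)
Your proposal is correct and is exactly the approach the paper takes: the paper's entire proof is the one line ``If $\theta$ is as in Theorem~\ref{t-weak2} then $\rho:=\theta\dotplus\varepsilon$ has the required properties,'' and your argument simply spells out the verification of that claim.
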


\begin{proof} If $\theta$ is as in Theorem \ref{t-weak2} then $\rho :=\theta\dotplus\varepsilon$ has the
required properties.
\end{proof}

\begin{cor} \label{c-separation}
Suppose $T_V\cup T_W\cup\{\varphi,\psi\}$ is inconsistent.
Then there is a $V\cap W$-sentence $\theta$  such that
$$T_V\cup\{\varphi\}\models \theta=1,\qquad T_W\cup\{\psi\}\models \theta=0.$$
\end{cor}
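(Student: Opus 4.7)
The plan is to turn the inconsistency into a quantitative gap between $\psi$ and $0$, apply the Weak Interpolant Theorem to a shifted version of $\psi$, and then rescale by a continuous connective so that the resulting interpolant takes the values $1$ on the $\varphi$-side and $0$ on the $\psi$-side. First, since $T_V\cup T_W\cup\{\varphi,\psi\}$ is inconsistent, the Compactness Corollary applied to the theory $T_V\cup T_W\cup\{\varphi\}$ and the sentence $\psi$ produces some $r\in(0,1]$ with $T_V\cup T_W\cup\{\varphi\}\models r\le\psi$; equivalently, the $W$-sentence $\psi_1:=r\dotminus\psi$ satisfies $T_V\cup T_W\cup\{\varphi\}\models\psi_1=0$.

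Next, apply the Weak Interpolant Theorem (Theorem \ref{t-weak2}) to $\varphi$ and $\psi_1$ with $\varepsilon:=r/2$ to obtain a $V\cap W$-sentence $\theta_1$ such that $T_V\cup\{\varphi\}\models\theta_1=0$ and $T_W\cup\{\theta_1\}\models \psi_1\le r/2$. The second conclusion unwinds to $T_W\cup\{\theta_1\}\models\psi\ge r/2$, so $T_W\cup\{\psi,\theta_1\}$ has no metric model; a second application of the Compactness Corollary then produces $s\in(0,1]$ with $T_W\cup\{\psi\}\models\theta_1\ge s$.

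Finally, let $C\colon[0,1]\to[0,1]$ be any continuous function with $C(0)=1$ and $C(x)=0$ for $x\ge s$ (for instance $C(x):=1\dotminus\min(x/s,1)$), and put $\theta:=C(\theta_1)$. Since $C$ is a unary continuous connective and $\theta_1$ is a $V\cap W$-sentence, so is $\theta$; in every metric model of $T_V\cup\{\varphi\}$ we have $\theta_1=0$ and hence $\theta=1$, while in every metric model of $T_W\cup\{\psi\}$ we have $\theta_1\ge s$ and hence $\theta=0$, as required.

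The main obstacle is the second use of the Compactness Corollary: a bare application of Theorem \ref{t-weak2} only forces $\theta_1>0$ on each model of $T_W\cup\{\psi\}$, which is too weak to be converted into a $\{0,1\}$-valued separator. One needs a uniform lower bound $s$, and it is precisely compactness that upgrades pointwise positivity to the uniform inequality $\theta_1\ge s$, after which the continuous truncation $C$ performs the final reshaping.
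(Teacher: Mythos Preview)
Your proof is correct and follows essentially the same approach as the paper's own argument: both apply the Compactness Corollary to extract $r$, invoke Theorem~\ref{t-weak2} on $\varphi$ and $r\dotminus\psi$ to obtain an intermediate sentence (your $\theta_1$, the paper's $\rho$), use inconsistency of $T_W\cup\{\psi,\theta_1\}$ and a second appeal to the Compactness Corollary to get a uniform lower bound $s$, and then rescale via $x\mapsto 1\dotminus(x/s)$ to produce the final $\{0,1\}$-valued separator. The only differences are cosmetic choices of $\varepsilon$ and notation.
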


\begin{proof}  By the  Compactness Corollary, there exists $r\in(0,1]$ such that
$$T_V\cup T_W\cup\{\varphi\}\models r\le \psi.$$
Let $\varepsilon\in(0,r]$.  By Theorem \ref{t-weak2}
there is a $V\cap W$-sentence $\rho$ such that
$$T_V\cup\{\varphi\}\models \rho=0,\qquad T_W\cup\{\rho\}\models (r\dotminus \psi)\le \varepsilon/2.$$
One can easily check that for any $x\in[0,1]$ we have $ (r\dotminus x)\le\varepsilon/2$ if and only if $(r-\varepsilon/2)\le x$.
Therefore
$$0< r-\varepsilon/2,\qquad T_W\cup\{\rho\}\models r-\varepsilon/2\le\psi,$$
so $T_W\cup\{\rho\}\cup \{\psi\}$ is inconsistent.
By the  Compactness Corollary,
there exists $s\in(0,1/2]$ such that
$$T_W\cup\{\psi\}\models s\le\rho.$$
Then  $\theta:= 1\dotminus(\rho /s)$
has the required properties.
\end{proof}

Note that Theorem \ref{t-weak2} is also an easy consequence of Corollary \ref{c-separation}.
To see that, suppose $T_V\cup T_W\cup\{\varphi\}\models\psi=0$ and $\varepsilon\in(0,1]$.  Then $T_V\cup T_W\cup \{\varphi,\varepsilon\dotle\psi\}$
is inconsistent, so Corollary \ref{c-separation} gives a $V\cap W$-sentence $\theta$ such
that $T_V\cup\{\varphi\}\models\theta=0$ and $T_W\cup\{\varepsilon\dotle\psi\}\models\theta=1$,
and hence $T_W\cup\{\theta\}\models \psi\le\varepsilon$.

While the Weak Interpolant Theorem shows that a single $V\cap W$ sentence serves as a weak $\varepsilon$-interpolant,
the next corollary shows that a countable set of $V\cap W$-sentences can serve as a weak interpolant (without the $\varepsilon$).

\begin{cor}  \label{c-countable}
Suppose   $T_V\cup T_W\cup\{\varphi\}\models\psi=0$.  Then
for some countable set $\Theta$ of $V\cap W$-sentences,
$T_V\cup\{\varphi\}\models\Theta$  and  $T_W\cup\Theta\models\psi=0$.
\end{cor}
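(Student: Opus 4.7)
The plan is to apply the Weak Interpolant Theorem \ref{t-weak2} countably many times, once for each $\varepsilon$ in a sequence tending to $0$, and collect the resulting interpolants into a single countable set $\Theta$. Concretely, for each $n\in\BN$ with $n\ge 1$, set $\varepsilon_n=1/n$. Since the hypothesis $T_V\cup T_W\cup\{\varphi\}\models\psi=0$ is exactly the hypothesis of Theorem \ref{t-weak2}, we get a $V\cap W$-sentence $\theta_n$ such that
$$T_V\cup\{\varphi\}\models\theta_n=0,\qquad T_W\cup\{\theta_n\}\models\psi\le\varepsilon_n.$$
I would then define $\Theta=\{\theta_n:n\ge 1\}$, which is countable and consists of $V\cap W$-sentences.

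The first required conclusion, $T_V\cup\{\varphi\}\models\Theta$, is immediate from the first clause above applied to each $n$. For the second conclusion, $T_W\cup\Theta\models\psi=0$, I would argue as follows: let $\cu M$ be any metric model of $T_W\cup\Theta$. Then $\theta_n^{\cu M}=0$ for every $n$, so in particular $\cu M\models T_W\cup\{\theta_n\}$, and the second clause gives $\psi^{\cu M}\le \varepsilon_n=1/n$. Letting $n\to\infty$ forces $\psi^{\cu M}=0$, which is the desired conclusion.

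There is no real obstacle here; the step that deserves attention is merely verifying that one is entitled to apply Theorem \ref{t-weak2} separately for each $n$ (its conclusion is existential, so the choice of $\theta_n$ depends on $n$, which is exactly why a countable $\Theta$ rather than a single sentence is needed). The passage from "$\psi^{\cu M}\le 1/n$ for all $n$" to "$\psi^{\cu M}=0$" uses only the fact that truth values lie in the real interval $[0,1]$, which replaces the role played by $\dotle$-manipulations in the finite-$\varepsilon$ version. This is why the $\varepsilon$ can be eliminated at the cost of countably many interpolants rather than one.
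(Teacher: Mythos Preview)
Your proof is correct and follows essentially the same approach as the paper: apply Theorem \ref{t-weak2} once for each $\varepsilon$ in a sequence tending to $0$ (the paper uses $2^{-n}$ rather than $1/n$, an immaterial difference) and collect the resulting interpolants into $\Theta$. Your verification of $T_W\cup\Theta\models\psi=0$ simply spells out what the paper leaves implicit.
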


\begin{proof}
By Theorem \ref{t-weak2},  for each $n\in\BN$ there is a $V\cap W$-sentence $\theta_n$
such that  $T_V\cup\{\varphi\}\models \rho_n=0$ and $T_W\cup\{\rho_n\}\models\psi\le 2^{-n}$.
So the result holds with
$\Theta=\{\rho_n\mid n\in\BN\}.$
\end{proof}

We say that a sequence of sentences $\langle\theta_n\rangle_{n\in\BN}$ is \emph{uniformly convergent}
if for every $\varepsilon>0$ there exists $n$ such that for all $k>n$, $\models|\theta_k-\theta_n|\le\varepsilon$.
Note that if  $\langle\theta_n\rangle_{n\in\BN}$ is uniformly convergent then $\lim_{n\to\infty} \theta_n$
exists in every metric structure, and the ``rate of convergence'' is uniform across all metric structures.
Also, if $\models|\theta_{n+1}-\theta_n|\le 2^{-n}$ for every $n$ then
$\langle\theta_n\rangle_{n\in\BN}$ is uniformly convergent.

The next corollary shows that a uniform limit of $V\cap W$ sentences can serve as a weak interpolant.

\begin{cor}  \label{c-sequence}
Suppose  $T_V\cup T_W\cup\{\varphi\}\models\psi=0.$
Then there is a uniformly convergent sequence $\langle \theta_n\rangle_{n\in\BN}$ of $V\cap W$-sentences such that
\begin{itemize}
\item[(i)] In every model  of $T_V$, if $\varphi=0$ then $\lim_{n\to\infty} \theta_n=0$.
\item[(ii)]  In every model  of $T_W$, if $\lim_{n\to\infty}\theta_n=0$ then  $\psi=0$.
\end{itemize}
\end{cor}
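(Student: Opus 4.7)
The plan is to apply the Weak Interpolant Theorem repeatedly (this is essentially the content of Corollary \ref{c-countable}) to obtain a sequence of $V\cap W$-sentences, and then form a suitable weighted sum that converges uniformly and whose limit retains the relevant properties. More precisely, for each $n\in\BN$, Theorem \ref{t-weak2} applied with $\varepsilon=2^{-n}$ yields a $V\cap W$-sentence $\rho_n$ such that $T_V\cup\{\varphi\}\models\rho_n=0$ and $T_W\cup\{\rho_n\}\models\psi\le 2^{-n}$.

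Next I would define
$$\theta_n := \sum_{k=0}^{n} 2^{-k-1}\rho_k.$$
This is a legitimate $V\cap W$-sentence, since the map $(x_0,\dots,x_n)\mapsto\sum_{k=0}^n 2^{-k-1}x_k$ is a continuous function from $[0,1]^{n+1}$ into $[0,1]$. Uniform convergence is immediate: in every metric structure,
$$|\theta_{n+1}-\theta_n| = 2^{-n-2}\rho_{n+1} \le 2^{-n-2},$$
so the sequence is uniformly Cauchy, and for every $\cu M$ the pointwise limit $\theta(\cu M):=\sum_{k=0}^\infty 2^{-k-1}\rho_k^{\cu M}$ exists with a uniform rate.

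Property (i) is then immediate: in any metric model $\cu M$ of $T_V$ with $\varphi^{\cu M}=0$, every $\rho_k^{\cu M}$ equals $0$, so each $\theta_n^{\cu M}=0$. For property (ii), suppose $\cu M\models T_W$ and $\lim_{n\to\infty}\theta_n^{\cu M}=0$. Then the series $\sum_{k=0}^{\infty}2^{-k-1}\rho_k^{\cu M}$ is zero, and since every term is non-negative with strictly positive coefficient, each $\rho_k^{\cu M}=0$. The defining property of $\rho_k$ then gives $\psi^{\cu M}\le 2^{-k}$ for every $k$, hence $\psi^{\cu M}=0$.

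There is no real obstacle here: the only subtle point is that the coefficients must be chosen to serve two purposes simultaneously, namely to force uniform Cauchy behaviour (which requires geometric decay) and to guarantee that the vanishing of the limit forces the vanishing of each summand (which requires strict positivity). The geometric weights $2^{-k-1}$ accomplish both, and this seems to be the cleanest way to go from the countable family of approximate interpolants supplied by Corollary \ref{c-countable} to a genuine uniformly convergent interpolating sequence.
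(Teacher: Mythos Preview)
Your proof is correct and follows the same overall strategy as the paper: obtain the approximate interpolants $\rho_n$ from Theorem~\ref{t-weak2} and then assemble them into a uniformly convergent sequence whose limit vanishes exactly when every $\rho_n$ vanishes. The only real difference is the assembling step. The paper sets
\[
\theta_n=\max_{m\le n}\min(\rho_m,2^{-m}),
\]
which is monotone nondecreasing in $n$ (so $\lim_n\theta_n=0$ iff every $\theta_n=0$) and satisfies $\theta_{n+1}\le\theta_n+2^{-(n+1)}$ for uniform convergence. You instead take the weighted partial sums $\theta_n=\sum_{k\le n}2^{-k-1}\rho_k$, which gives uniform convergence from $|\theta_{n+1}-\theta_n|\le 2^{-n-2}$ and recovers $\rho_k=0$ for all $k$ from the vanishing of a nonnegative series. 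Your version is arguably a bit more streamlined; the paper's $\max$--$\min$ formula stays within the lattice connectives and yields a monotone sequence, which some readers may find conceptually tidier. Either construction works without difficulty.
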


\begin{proof}
Let $\theta_n=\max_{m\le n}\min(\rho_m,2^{-m})$ where $\rho_n$ is as  in the proof of
Corollary \ref{c-countable}.  For each $n$, $\theta_n$ is a $V\cap W$-sentence.
We let $\cu M$ be a metric structure with vocabulary $V\cap W$.
Clearly, $\theta_n^{\cu M}\le\theta_{n+1}^{\cu M}$ for each $n.$
Therefore $\lim_{n\to\infty} \theta_n^{\cu M}=0$ if and only if $(\forall n)\theta_n^{\cu M}=0$.
We show  that
\begin{equation}  \label{eq3}
\theta_{n+1}^{\cu M}\le\theta_n^{\cu M}+2^{-(n+1)}
\end{equation}
for each $n$.  (\ref{eq3}) is trivially true if $\theta_{n+1}^{\cu M}=\theta_n^{\cu M}$.
If $\theta_{n+1}^{\cu M}>\theta_n^{\cu M}$, then (\ref{eq3}) still holds because
$$\theta_{n+1}^{\cu M}=\min(\rho_{n+1}^{\cu M},2^{-(n+1)})\le 2^{-(n+1)}\le\theta_n^{\cu M}+2^{-(n+1)}.$$
It follows that $\langle\theta_n\rangle_{n\in\BN}$ is  uniformly convergent,
and by Corollary \ref{c-countable}, (i) and (ii) hold.
\end{proof}

\section{Strong Interpolants}  \label{s-strong}

Convention \ref{conv2} is still in force.
The  Strong Interpolant Theorem \ref{t-strong}  in the Introduction is a special case of  Theorem \ref{t-strong2} below
(Theorem \ref{t-strong2} reduces to Theorem \ref{t-strong} when
both $T_V$ and $T_W$ are  empty sets of sentences).

\begin{thm}  \label{t-strong2}  Suppose that  $T_V\cup T_W\models\varphi\ge\psi$.  Then for each $\varepsilon>0$ there is a $V\cap W$-sentence
$\theta$ such that
\begin{itemize}
\item[(i)] $T_V\models\varphi\ge\theta$,
\item[(ii)] $ T_W\models \theta\ge(\psi\dotminus \varepsilon)$.
\end{itemize}
\end{thm}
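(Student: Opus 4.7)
My plan is to follow the outline from the Introduction. Fix $\varepsilon \in (0,1]$ and set $\mu := \delta := \gamma := \varepsilon/3$, together with the finite grid $\mathbb{D} := \{i\mu : 0 \le i \le \lfloor 1/\mu\rfloor\} \subseteq [0,1]$. For each $r \in \mathbb{D}$, the hypothesis $T_V \cup T_W \models \varphi \ge \psi$ gives $T_V \cup T_W \cup \{\varphi \dotminus r\} \models (\psi \dotminus r) = 0$, since $\varphi \le r$ forces $\psi \le \varphi \le r$. Applying Theorem \ref{t-weak2} to $\varphi \dotminus r$ and $\psi \dotminus r$ with slack $\delta$ therefore produces a $V \cap W$-sentence $\theta_r$ satisfying (A) $T_V \models \varphi \le r \Rightarrow \theta_r = 0$, and (B) $T_W \models \theta_r = 0 \Rightarrow \psi \le r + \delta$.

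Next I will sharpen (B) quantitatively via compactness. For each $r \in \mathbb{D}$, the $W$-sentence $\max(\theta_r, (r+\delta+\gamma) \dotminus \psi)$ cannot vanish in any $T_W$-model, since vanishing would require both $\theta_r = 0$ and $\psi \ge r + \delta + \gamma$, incompatible with (B). Hence $T_W \cup \{\max(\theta_r, (r+\delta+\gamma)\dotminus\psi)\}$ is inconsistent, and the Compactness Corollary \ref{c-compact} yields a constant $t_r \in (0,1]$ with $T_W \models t_r \le \max(\theta_r, (r+\delta+\gamma)\dotminus\psi)$. Unpacking the max gives, in every $T_W$-model, that $\psi \ge r + \delta + \gamma$ forces $\theta_r \ge t_r$.

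Setting $K_r := 1/t_r$, I will take
$$\theta := \max_{r \in \mathbb{D}} \min\bigl(r,\, \min(K_r \theta_r,\, 1)\bigr),$$
a $V \cap W$-sentence of the form $C(\langle\theta_r\rangle_{r \in \mathbb{D}})$ as advertised. For (i) in Theorem \ref{t-strong2}: in any $T_V$-model $M$, clause (A) kills every term with $r \ge \varphi^M$ (giving $0$) and bounds the others by $r < \varphi^M$, so $\theta^M \le \varphi^M$. For (ii): in any $T_W$-model $M$, the case $\psi^M < \varepsilon$ is trivial; otherwise let $r^* \in \mathbb{D}$ be the largest element with $r^* \le \psi^M - \delta - \gamma$, so $r^* > \psi^M - \delta - \gamma - \mu = \psi^M - \varepsilon$ and $\psi^M \ge r^* + \delta + \gamma$. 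The compactness bound then gives $\theta_{r^*}^M \ge t_{r^*}$, whence $K_{r^*}\theta_{r^*}^M \ge 1$, so the $r^*$-term equals $r^*$ and $\theta^M \ge r^* > \psi^M - \varepsilon$.

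The hard part is precisely the passage from the family $\{\theta_r\}$ to a single $V \cap W$-sentence. From (A)--(B) alone, $\theta_r^M$ can be strictly positive but arbitrarily close to $0$, so any naively chosen continuous function of the $\theta_r$'s will fail to distinguish reliably between $\theta_r^M = 0$ and $\theta_r^M > 0$. The second compactness step above is the crux: it upgrades the qualitative implication (B) to the uniform quantitative bound $\theta_r \ge t_r$ whenever $\psi \ge r + \delta + \gamma$, producing positive constants $K_r = 1/t_r$ that make the continuous combination behave like a sharp indicator. This is the ingredient with no first-order analog, since in first-order logic the interpolant values are Boolean and no ``gray zone'' arises.
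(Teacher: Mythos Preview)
Your proof is correct and follows essentially the same strategy as the paper: discretize $[0,1]$ into a finite grid, apply the Weak Interpolant Theorem (Theorem~\ref{t-weak2}) at each grid point $r$ to the shifted sentences $\varphi\dotminus r$ and $\psi\dotminus r$, use the Compactness Corollary to upgrade the qualitative implication to a uniform positive lower bound $t_r$ on the interpolant, and then aggregate via a continuous function.

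The one genuine difference lies in the aggregation step. The paper normalizes each interpolant to $\rho_k:=\min(\gamma_k/r,1)$ and then combines them via $f(\vec x)=\max_{k<2^n}\bigl[(k{+}1)\varepsilon\prod_{j\le k}x_j\bigr]$; this only yields the weaker bound $T_V\models\varphi\ge(\theta\dotminus\varepsilon)$, so the paper needs an extra halving-of-$\varepsilon$ step to recover (i). Your combination $\theta=\max_{r\in\mathbb D}\min\bigl(r,\min(K_r\theta_r,1)\bigr)$ is simpler and gives (i) directly: for $r\ge\varphi^{\cu M}$ the inner factor vanishes by (A), and for $r<\varphi^{\cu M}$ the outer $\min$ caps the term at $r<\varphi^{\cu M}$. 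This is a small but genuine streamlining of the argument.
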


\begin{proof}
Fix $n\in\BN$ and let $\varepsilon=2^{-n}$.  It suffices to  find a $V\cap W$-sentence $\theta$ such that
\begin{itemize}
\item[(i*)] $T_V\models\varphi\ge(\theta\dotminus\varepsilon)$.
\item[(ii)] $ T_W\models \theta\ge(\psi\dotminus \varepsilon)$.
\end{itemize}
Because if (i*) and (ii) hold with $\varepsilon/2$ in place of $\varepsilon$,
then (i) and (ii) hold with $\theta\dotminus\varepsilon/2$ in place of $\theta$.

Our proof will have two parts.  In the first part we will use Theorem \ref{t-weak2}
to find, for each $k<2^n$, a $V\cap W$-sentence $\rho_k$ such that

\begin{equation} \label{eq4}
T_V\cup\{\varphi\dotle k\varepsilon\}\models \rho_k=0,
\end{equation}
and
\begin{equation} \label{eq5}
T_W\cup\{\psi\dotge (k+1)\varepsilon\}\models \rho_k=1.
\end{equation}

In the second part of the proof we will find a continuous function
$f\colon[0,1]^{2^n}\to[0,1]$ such that $\theta:=f(\langle\rho_k\rangle_{k<2^n})$
satisfies (i*) and (ii).

The ``graph'' of  $\rho_k^{\cu M}$ as a function of $\cu M$ is illustrated by the bold line in Figure 2.
As in Figure 1, we put the interval $[0,1]$ on the vertical axis and the class of all metric models $\cu M$ of $T_V\cup T_W$
on the horizontal axis.

\begin{figure}
\begin{center}
\setlength{\unitlength}{1mm}
\begin{picture}(150,100)(0,10)

\put(10,15){\line(1,0){80}}
\put(10,90){\line(1,0){80}}
\put(10,15){\line(0,1){75}}
\put(90,15){\line(0,1){75}}

\qbezier(10,30),(50,30),(90,70)
\qbezier(10,40),(50,45),(90,80)
\put(41.2,48.5){\circle*{1}}
\put(51,40.5){\circle*{1}}
\put(35,52){\makebox(0,0){$(\varphi^{\cu M},k\varepsilon)$}}
\put(62,38){\makebox(0,0){$(\psi^{\cu M},k\varepsilon+\varepsilon)$}},
\put(40,15){\line(0,1){33}}
\put(50,40.5){\line(0,1){49}}

\put(5,15){\makebox(0,0){$0$}}
\put(60,10){\makebox(0,0){$\cu M$}}
\put(5,60){\makebox(0,0){$r$}}
\put(5,90){\makebox(0,0){$1$}}
\put(110,70){\makebox(0,0)[r]{$\psi^{\cu M}=r$}}
\put(110,80){\makebox(0,0)[r]{$\varphi^{\cu M}=r$}}

\thicklines
\put(10,15){\line(1,0){30} }
\put(50,90){\line(1,0){40}}
\qbezier(40,15),(44,44),(50,90)
\put(38,70){\makebox(0,0){$\rho_k^{\cu M}=r$}}

\end{picture}
\end{center}
\caption{Graphs of $\varphi^{\cu M}, \psi^{\cu M}$, and $\rho_k^{\cu M}$}
\end{figure}
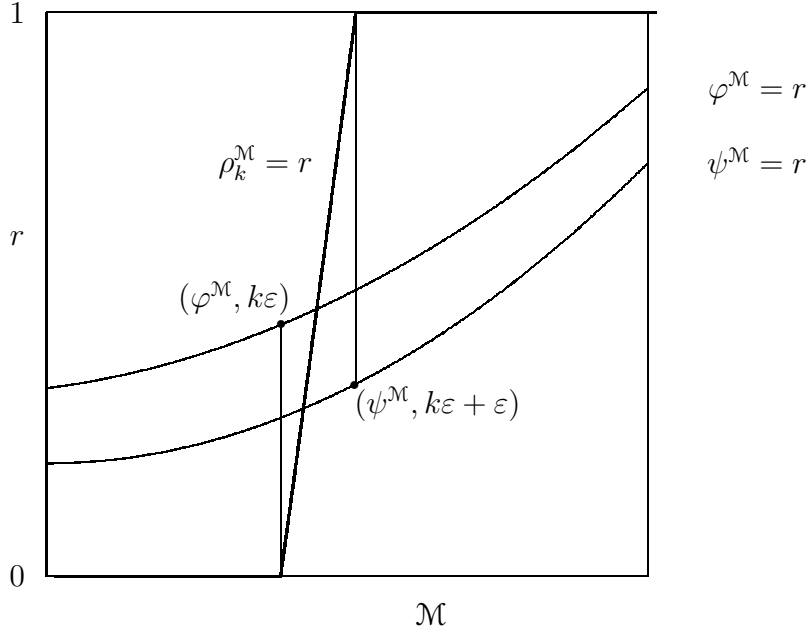

\emph{First part of proof:}
Let $k<2^n$.  Since $T_V\cup T_W\models \varphi\dotge\psi$, we have
$$T_V\cup T_W\cup\{\varphi\dotle k\varepsilon\}\models \psi\le k\varepsilon.$$
Then by Theorem \ref{t-weak2} (with $\varphi\dotle k\varepsilon,\psi\dotle k\varepsilon,\varepsilon/2$ in place of
$\varphi,\psi,\varepsilon$),    there is
a $V\cap W$-sentence $\gamma_k$ such that
\begin{equation}  \label{eq6}
T_V\cup\{\varphi\dotle k\varepsilon\}\models \gamma_k=0
\end{equation}
and
$$T_W\cup\{\gamma_k\}\models \psi\le k\varepsilon + \varepsilon/2.$$
Therefore the set of sentences
$$ T_W\cup \{\gamma_k\}\cup\{ \psi\dotge (k+1)\varepsilon\}$$
is inconsistent.  By  Compactness Corollary, there exists $r\in(0,1]$ such that
\begin{equation} \label{eq7}
T_W\cup\{\psi\dotge (k+1)\varepsilon\}\models \gamma_k\ge r.
\end{equation}
Now  $\rho_k:= \min(\gamma_k/r,1)$
 is a  $V\cap W$-sentence.
By the definition of $\rho_k$, (\ref{eq4}) follows at once from (\ref{eq6}), and (\ref{eq5}) follows at once from (\ref{eq7}).

\emph{Second part of proof:}
Let $f\colon[0,1]^{2^n}\to[0,1]$ be the continuous function
$$f(\vec x):=\max_{k<2^n}\left[ ( k+1)\varepsilon\prod_{j\le k}x_j\right].$$
Let
$$\theta:=f(\langle \rho_k\rangle_{k<2^n}).$$
Since $f$ is continuous and each $\rho_k$ is a $V\cap W$-sentence, $\theta$
is a $V\cap W$-sentence.
We show that $\theta$ satisfies (i*) and (ii).
It is clear that $f$ has the following properties for each $k<2^n$:
\begin{itemize}
\item[(a)]  If $x_k=0$ then $f(\vec x)\le k\varepsilon$.
\item[(b)]  If $x_j=1$ for each $j\le k$ then $f(\vec x)\ge (k+1)\varepsilon$.
\end{itemize}

To prove (i*), we work in an arbitrary metric  model $\cu M$ of $T_V$.
In $\cu M$, if $\varphi\ge 1-\varepsilon$ then it is trivial that $ \varphi\ge(\theta\dotminus\varepsilon)$.
 Suppose instead that $\varphi<1-\varepsilon$.   Then there is a
unique $k<2^n-1$ such that
$$k\varepsilon\le\varphi< (k+1)\varepsilon.$$
Then $k+1<2^n$ and $\varphi\le(k+1)\varepsilon$, so by
(\ref{eq4}) we have $\rho_{k+1}=0$.
Hence  by (a), $\theta\le (k+1)\varepsilon$.  Therefore $(\theta\dotminus\varepsilon)\le k\varepsilon$
and $k\varepsilon\le\varphi$.  This proves (i*).

To prove (ii) we work in an arbitrary model of $T_W$.  If $\psi\le\varepsilon$ then it is trivial that $\theta\ge(\psi\dotminus\varepsilon)$.
Suppose instead that $\psi>\varepsilon$.  Then there is a unique $0<\ell<2^n$ such that
$$\ell\varepsilon<\psi\le(\ell+1)\varepsilon.$$
Hence for each $j\le \ell-1$ we have $(j+1)\varepsilon<\psi$, so $(j+1)\varepsilon\le\psi$.
By (\ref{eq5}),  $\rho_j=1$ for each $j\le\ell-1$.   Since $\ell-1\in\BN$ we may apply (b) to get
$\theta\ge \ell\varepsilon$.  Since $\psi\le(\ell+1)\varepsilon$, we also have $\ell\varepsilon\ge\psi\dotminus\varepsilon $.
Therefore $\theta\ge(\psi\dotminus\varepsilon)$.  This proves (ii).
\end{proof}

Note that in the above proof, the function $f$ is non-decreasing in each variable.  Such functions are sometimes called aggregation functions.

\begin{cor}   Suppose that  $T_V\cup T_W\models\varphi\ge\psi$.  Then for each $\varepsilon>0$ there is a $V\cap W$-sentence
$\rho$ such that
\begin{itemize}
\item[(i)] $T_V\models\varphi\ge(\rho\dotminus\varepsilon)$,
\item[(ii)] $ T_W\models (\rho\dotminus\varepsilon)\ge(\psi\dotminus \varepsilon)$.
\end{itemize}
\end{cor}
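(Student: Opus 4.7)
The plan is to derive this corollary directly from Theorem~\ref{t-strong2}; it is essentially a cosmetic reformulation in which one sentence plays the role of both sides of the sandwich. First I would apply Theorem~\ref{t-strong2} to the given $\varphi$, $\psi$, and $\varepsilon$ to obtain a $V\cap W$-sentence $\theta$ with $T_V\models\varphi\ge\theta$ and $T_W\models\theta\ge(\psi\dotminus\varepsilon)$. Then I would set $\rho:=\theta\dotplus\varepsilon$, which is again a $V\cap W$-sentence, and verify that this $\rho$ satisfies (i) and (ii).

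The key computation is the pointwise identity
\[ (\theta\dotplus\varepsilon)\dotminus\varepsilon \;=\; \min(\theta,\,1-\varepsilon) \]
in every $[0,1]$-valued structure: if $\theta+\varepsilon\le 1$, both sides equal $\theta$; otherwise $\theta>1-\varepsilon$, so both sides equal $1-\varepsilon$. Given this identity, (i) reduces to the claim $\varphi\ge\min(\theta,1-\varepsilon)$ in every model of $T_V$, which is immediate from $\varphi\ge\theta$. For (ii) I would combine the two observations $\theta\ge\psi\dotminus\varepsilon$ in every model of $T_W$ (from the choice of $\theta$) and $1-\varepsilon\ge\psi\dotminus\varepsilon$ in every metric structure (since $\psi\le 1$ forces $\psi\dotminus\varepsilon\le 1-\varepsilon$); the minimum $\min(\theta,1-\varepsilon)$ is then still at least $\psi\dotminus\varepsilon$, which is exactly (ii).

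There is no real obstacle here, as all of the model-theoretic content already lives in Theorem~\ref{t-strong2}. The only subtlety worth flagging is the possible saturation of $\theta\dotplus\varepsilon$ at the value $1$ when $\theta$ is close to $1$; this is precisely why $\rho\dotminus\varepsilon$ must be computed as the truncation $\min(\theta,1-\varepsilon)$ rather than as $\theta$ itself. Because $\psi\dotminus\varepsilon$ can never exceed $1-\varepsilon$, this truncation does not spoil either inequality, and the corollary follows.
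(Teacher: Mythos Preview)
Your proposal is correct and takes exactly the same approach as the paper: apply Theorem~\ref{t-strong2} to obtain $\theta$, set $\rho:=\theta\dotplus\varepsilon$, and check the two inequalities. The paper's proof is a single line stating this choice of $\rho$ without spelling out the verification; your computation of $(\theta\dotplus\varepsilon)\dotminus\varepsilon=\min(\theta,1-\varepsilon)$ and the subsequent checks simply fill in the routine details the paper leaves to the reader.
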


\begin{proof} If $\theta$ is as in Theorem \ref{t-strong2} then $\rho :=\theta\dotplus\varepsilon$ has the
required properties.
\end{proof}

The next corollary shows that a uniformly convergent sequence of $V\cap W$-sentences can serve as a strong interpolant.

\begin{cor}  \label{c-limit}
Suppose  $T_V\cup T_W\models\varphi\dotge\psi$.
Then there is a uniformly convergent sequence $\langle \theta_n\rangle_{n\in\BN}$ of $V\cap W$-sentences such that
\begin{itemize}
\item[(i)] In every metric model  of $T_V$, $\varphi\ge\lim_{n\to\infty} \theta_n$.
\item[(ii)]  In every metric model  of $T_W$, $\lim_{n\to\infty}\theta_n\ge \psi$.
\end{itemize}
\end{cor}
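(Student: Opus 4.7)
The plan is to mimic the proof of Corollary~\ref{c-sequence}, but starting from strong rather than weak interpolants and inserting a clamping step that forces uniform convergence.

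For each integer $n\ge 1$, apply Theorem~\ref{t-strong2} with $\varepsilon=2^{-n}$ to obtain a $V\cap W$-sentence $\sigma_n$ such that $T_V\models\varphi\ge\sigma_n$ and $T_W\models\sigma_n\ge\psi\dotminus 2^{-n}$. I then define the desired sequence recursively by $\theta_1:=\sigma_1$ and, for $n\ge 2$,
$$\theta_n := \min\bigl(\theta_{n-1}\dotplus 2^{-(n-1)},\ \max(\theta_{n-1}\dotminus 2^{-(n-1)},\ \sigma_n)\bigr).$$
The right-hand side is a continuous function of the $V\cap W$-sentences $\theta_{n-1}$ and $\sigma_n$, so each $\theta_n$ is again a $V\cap W$-sentence. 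Geometrically, $\theta_n$ is the truncation of $\sigma_n$ to the interval $[\theta_{n-1}-2^{-(n-1)},\,\theta_{n-1}+2^{-(n-1)}]\cap[0,1]$.

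Uniform convergence is built into the recursion: a short case split on whether $\theta_n$ equals the outer $\min$'s first or second argument yields $|\theta_n-\theta_{n-1}|\le 2^{-(n-1)}$ in every metric structure, so a geometric-tail estimate shows $\langle\theta_n\rangle$ is uniformly Cauchy. For (i) and (ii) I would prove by induction on $n$ two invariants: in every metric model of $T_V$, $\theta_n\le\varphi$; and in every metric model of $T_W$, $\theta_n\ge\psi-2^{-n}$ (as real numbers). The $T_V$-invariant is immediate from $\theta_n\le\max(\theta_{n-1}\dotminus 2^{-(n-1)},\sigma_n)\le\max(\theta_{n-1},\sigma_n)$ combined with the inductive hypothesis and $\sigma_n\le\varphi$. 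The $T_W$-invariant uses both sides of the outer $\min$: the term $\theta_{n-1}\dotplus 2^{-(n-1)}$ is $\ge\psi$ by the inductive hypothesis (since $\psi\le 1$), while $\max(\theta_{n-1}\dotminus 2^{-(n-1)},\sigma_n)\ge\sigma_n\ge\psi-2^{-n}$. Passing to the limit then gives (i) and (ii).

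The main obstacle is reconciling uniform convergence, which must hold across \emph{all} metric structures, with the desired limit inequalities, which need only hold in models of $T_V$ or $T_W$. The strong interpolants $\sigma_n$ furnished by Theorem~\ref{t-strong2} carry no information outside those two model classes and so need not converge uniformly on their own; in particular, the naive choice $\theta_n:=\sigma_n$ fails. The clamping step caps the oscillation of the constructed sequence at $2^{-(n-1)}$ uniformly in $\cu M$, and the specific shape of the clamp (upper cap at $\theta_{n-1}\dotplus 2^{-(n-1)}$, lower cap at $\theta_{n-1}\dotminus 2^{-(n-1)}$) is calibrated precisely so that both inductive invariants survive.
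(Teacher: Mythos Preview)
Your proof is correct and follows essentially the same strategy as the paper: apply Theorem~\ref{t-strong2} at each scale $2^{-n}$ and then recursively clamp the resulting interpolants so that consecutive terms differ by at most $2^{-(n-1)}$, verifying by induction that the $T_V$- and $T_W$-bounds survive the clamp. The only difference is cosmetic: the paper uses the one-sided clamp $\theta_{n+1}:=\max\bigl(\theta_n,\min(\gamma_n,\theta_n\dotplus 2^{-n})\bigr)$, which also forces the sequence to be nondecreasing, whereas your two-sided clamp does not; the inductive verifications are otherwise parallel.
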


\begin{proof}
By Theorem \ref{t-strong2}, for each $n\in\BN$ there is a $V\cap W$-sentence $\gamma_n$ such that
$\varphi\ge \gamma_n$ in any metric model of $T_V$,
and $\gamma_n\ge(\psi\dotminus 2^{-n})$
in any metric model of $T_W$.
Now let $\theta_0:=0$, and for each $n$ let
$$\theta_{n+1}:=\max(\theta_{n},\min(\gamma_{n},\theta_{n}\dotplus 2^{-n})).$$
It is clear that each $\theta_n$ is a $V\cap W$-sentence, and
$$\models \theta_{n}\le\theta_{n+1},\quad
\models |\theta_{n+1}-\theta_{n}|\le 2^{-n}.$$
Therefore $\langle\theta_n\rangle_{n\in\BN}$
is uniformly convergent.
We also have
$$\models\theta_{n+1}\le \max(\theta_n,\gamma_n),\quad T_V\models \varphi\ge\gamma_n.$$
It follows by induction that $T_V\models\varphi\ge\theta_n$, so (i) holds.
And
$$ \models\theta_{n+1}\ge\min(\gamma_n,\theta_n\dotplus 2^{-n}),\quad T_W\models \gamma_n\ge(\psi\dotminus 2^{-n}),$$
so it follows by induction that $T_W\models\theta_n\ge(\psi\dotminus 2^{-n})$.  Hence (ii) holds as well.
\end{proof}

The arguments in this paper actually prove more general results that apply to
arbitrary $[0,1]$-valued structures as developed in [Ke] as well as to metric structures.

Hereafter, $V$ and $W$ will  denote vocabularies, $\varphi$ will always be a $V$-sentence, and
$\psi$ will always be a $W$-sentence.  We fix a $V$-theory $T_V$ and a $W$-theory $T_W$.  But no metric signature is given.
$T\models_g U$ will  mean that every general  $[0,1]$-valued model of $T$ is a  model of $U$.

Here is the analogue of the Weak Interpolant Theorem for general structures.

\begin{thm}   \label{t-weak3}  Suppose that
$$T_V\cup T_W\cup\{\varphi\}\models_g \psi=0.$$
Then for each $\varepsilon \in (0,1]$,
there is a $V\cap W$-sentence $\theta$ such that
$$T_V\cup\{\varphi\}\models_g \theta=0,\quad T_W\cup \{\theta\}\models_g \psi\le\varepsilon.$$
\end{thm}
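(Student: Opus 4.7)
The plan is to run the proof of Theorem \ref{t-weak2} essentially verbatim, with ``metric model'' replaced throughout by ``general $[0,1]$-valued model'' and $\models$ by $\models_g$. The two external inputs to that proof are compactness (Fact \ref{f-compactness} and Corollary \ref{c-compact}) and the Robinson Consistency Theorem (Theorem \ref{t-robinson}), so the first task is to establish their general-structure analogues.

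Compactness for arbitrary $[0,1]$-valued structures is developed in [Ke], and the two-line deduction of the Compactness Corollary carries over without change. For the Robinson analogue, the proof of Theorem \ref{t-robinson} uses only three external inputs: existence of $\kappa$-special models, uniqueness up to isomorphism of $\kappa$-special models of a complete theory (both Fact \ref{f-special}), and preservation of $\kappa$-specialness under taking the $V$-part of a $V\cup W$-structure (Fact \ref{f-reduct}). Since [Ke] develops all three in the setting of general $[0,1]$-valued structures, repeating the proof of Theorem \ref{t-robinson} word for word yields the desired statement: if $T$ is the general theory of some $V\cap W$-structure and both $T\cup T_V$ and $T\cup T_W$ are $\models_g$-consistent, then so is $T\cup T_V\cup T_W$.

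With these two tools in hand, I transcribe the proof of Theorem \ref{t-weak2}. Let $U$ be the set of $V\cap W$-sentences $\rho$ with $T_V\cup\{\varphi\}\models_g \rho=0$. The heart of the argument is the claim that $T_W\cup U\models_g \psi=0$, which is proved by contradiction: if $\cu M\models_g T_W\cup U$ with $\psi^{\cu M}\ge r>0$, let $T$ be the general theory of $\cu M\upharpoonright(V\cap W)$; then $T\cup T_W\cup\{r\dotle\psi\}$ is consistent, and the compactness-based trick with $\rho:=\max(T_0)\in U$ forces $T\cup T_V\cup\{\varphi\}$ to be consistent as well, contradicting the hypothesis via the general Robinson theorem. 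A final application of compactness to $T_W\cup U\cup\{\varepsilon\dotle\psi\}$ produces a finite $U^\varepsilon\subseteq U$ whose maximum serves as the required $\theta$. The only thing to double-check is that each fact cited along the way has a general-structure counterpart in [Ke]; the author's remark introducing the theorem is essentially the assertion that it does. There is no new difficulty, because the truncated connectives $\dotminus$, $\dotle$, $\dotplus$ are continuous functions on $[0,1]$ and behave identically in any $[0,1]$-valued structure.
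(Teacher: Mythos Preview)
Your proposal is correct and is precisely the paper's own approach: the paper does not give a separate proof of Theorem~\ref{t-weak3} but simply remarks that ``the arguments in this paper actually prove more general results that apply to arbitrary $[0,1]$-valued structures as developed in [Ke],'' and your write-up is a faithful unpacking of that remark, identifying the exact external inputs (compactness, Facts~\ref{f-special} and~\ref{f-reduct}, hence Theorem~\ref{t-robinson}) that must be lifted from [Ke] to the general setting.
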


Here is the analogue of the Strong Interpolant Theorem for general structures.

\begin{thm}  \label{t-strong3}  Suppose that  $T_V\cup T_W\models_g\varphi\ge\psi$.  Then for each $\varepsilon>0$ there is a $V\cap W$-sentence
$\theta$ such that
\begin{itemize}
\item[(i)] $T_V\models_g\varphi\dotge\theta$,
\item[(ii)] $ T_W\models_g \theta\dotge(\psi\dotminus \varepsilon)$.
\end{itemize}
\end{thm}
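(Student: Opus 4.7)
The plan is to mimic the proof of Theorem \ref{t-strong2} essentially verbatim, substituting $\models_g$ for $\models$ throughout and appealing to Theorem \ref{t-weak3} in place of Theorem \ref{t-weak2}. First, exactly as in the proof of Theorem \ref{t-strong2}, I would reduce the problem to finding a $V\cap W$-sentence $\theta$ satisfying the weaker pair of conditions
\begin{itemize}
\item[(i*)] $T_V\models_g\varphi\ge(\theta\dotminus\varepsilon)$,
\item[(ii)] $T_W\models_g\theta\ge(\psi\dotminus\varepsilon)$,
\end{itemize}
by then replacing $\theta$ by $\theta\dotminus\varepsilon/2$ and halving $\varepsilon$. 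Fix $n\in\BN$ with $\varepsilon=2^{-n}$.

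\emph{First part.} For each $k<2^n$, apply Theorem \ref{t-weak3} to the sentences $\varphi\dotle k\varepsilon$, $\psi\dotle k\varepsilon$, with tolerance $\varepsilon/2$, using the hypothesis $T_V\cup T_W\models_g\varphi\ge\psi$, to obtain a $V\cap W$-sentence $\gamma_k$ with
\[
T_V\cup\{\varphi\dotle k\varepsilon\}\models_g\gamma_k=0,\qquad T_W\cup\{\gamma_k\}\models_g\psi\le k\varepsilon+\varepsilon/2.
\]
The second condition makes $T_W\cup\{\gamma_k\}\cup\{\psi\dotge(k+1)\varepsilon\}$ inconsistent, so by the general-structure version of the Compactness Corollary there is $r\in(0,1]$ with $T_W\cup\{\psi\dotge(k+1)\varepsilon\}\models_g\gamma_k\ge r$. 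Setting $\rho_k:=\min(\gamma_k/r,1)$ gives
\[
T_V\cup\{\varphi\dotle k\varepsilon\}\models_g\rho_k=0,\qquad T_W\cup\{\psi\dotge(k+1)\varepsilon\}\models_g\rho_k=1.
\]

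\emph{Second part.} Use the same aggregation function
\[
f(\vec x):=\max_{k<2^n}\Bigl[(k+1)\varepsilon\prod_{j\le k}x_j\Bigr]
\]
as in the proof of Theorem \ref{t-strong2}, and set $\theta:=f(\langle\rho_k\rangle_{k<2^n})$. The continuity of $f$ guarantees $\theta$ is a $V\cap W$-sentence, and the two properties (a) ``$x_k=0\Rightarrow f(\vec x)\le k\varepsilon$'' and (b) ``$x_j=1$ for all $j\le k\Rightarrow f(\vec x)\ge(k+1)\varepsilon$'' of $f$ are purely about the real function, so they transfer unchanged. The verifications of (i*) and (ii) from these properties work verbatim in the general setting: in a general model of $T_V$ with $\varphi<1-\varepsilon$, choose $k$ with $k\varepsilon\le\varphi<(k+1)\varepsilon$, use $\rho_{k+1}=0$ via (\ref{eq4})-analogue, and apply (a); in a general model of $T_W$ with $\psi>\varepsilon$, choose $\ell$ with $\ell\varepsilon<\psi\le(\ell+1)\varepsilon$, use $\rho_j=1$ for $j\le\ell-1$ via the (\ref{eq5})-analogue, and apply (b).

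The only non-routine issue is legitimacy: I am implicitly using a Compactness Corollary for general $[0,1]$-valued structures. The author signals in the paragraph introducing Theorem \ref{t-weak3} that the tools of [Ke] supply exactly what is needed; the compactness theorem for general $[0,1]$-valued structures and its corollary follow by the same argument as Fact \ref{f-compactness} and Corollary \ref{c-compact}, with ``metric model'' replaced by ``general $[0,1]$-valued model.'' Granting this, there is no new obstacle: every step of the proof of Theorem \ref{t-strong2} is syntactic manipulation plus appeals to the Weak Interpolation theorem and Compactness, all of which have been reproved in the general setting. So the main work is simply checking that each earlier lemma (Weak Interpolation, Compactness, Robinson Consistency) has been given a general-structures analogue, which the author asserts holds.
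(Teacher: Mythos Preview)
Your proposal is correct and is exactly the approach the paper intends: the paper gives no separate proof of Theorem \ref{t-strong3} but simply asserts that ``the arguments in this paper actually prove more general results that apply to arbitrary $[0,1]$-valued structures,'' and your writeup faithfully carries the proof of Theorem \ref{t-strong2} through with $\models_g$, Theorem \ref{t-weak3}, and the general-structures Compactness in place of their metric counterparts.
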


To complete the picture, we recall one more result from [BBHU].

\begin{fact}  \label{f-completion}  (Theorem 3.7 in  [BBHU])  If $\BL$ is a metric signature with vocabulary $V$
and $T_V$ contains sentences that specify  the uniform continuity moduli for $\BL$,
then every general $[0,1[$-valued model of $T_V$ is elementarily
equivalent to a metric structure with signature $\BL$.
\end{fact}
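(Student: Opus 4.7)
The plan is to construct, from a given general $[0,1]$-valued model $\cu M$ of $T_V$, a metric $\BL$-structure $\hat{\cu M}$ that agrees with $\cu M$ on every $V$-sentence; elementary equivalence then follows immediately. This is the standard two-step construction: first quotient by the $d$-kernel, then take the metric completion and extend the symbols by uniform continuity.

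First, define $x \sim y$ iff $d^{\cu M}(x,y) = 0$. The hypothesis that $T_V$ contains sentences specifying the uniform continuity moduli for every symbol in $\BL$ is precisely what makes the atomic interpretations factor through $\sim$: if $d^{\cu M}(x_i, y_i) = 0$ for all $i$, then the modulus forces $d^{\cu M}(F^{\cu M}(\vec x), F^{\cu M}(\vec y)) = 0$ and $R^{\cu M}(\vec x) = R^{\cu M}(\vec y)$ for every function symbol $F$ and predicate symbol $R$. Setting $\cu M_0 := \cu M / {\sim}$, a routine induction on formula complexity gives $\varphi^{\cu M_0} = \varphi^{\cu M}$ for every $V$-formula $\varphi$, since quotienting collapses each equivalence class to a single element without changing any atomic value, and both connectives and quantifiers commute with the collapse.

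Second, form the metric completion $\hat{M}$ of the (now genuine) metric space $(M_0, d^{\cu M_0})$. For each $n$-ary predicate or function symbol with a uniform continuity modulus supplied by $T_V$, the map $R^{\cu M_0}$ or $F^{\cu M_0}$ extends uniquely from the dense subspace $M_0$ to a uniformly continuous map on $\hat{M}$ with the same modulus. This defines a metric structure $\hat{\cu M}$ of signature $\BL$.

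The main obstacle is the third step: showing by induction on $\varphi$ that $\varphi(\vec a)^{\hat{\cu M}} = \varphi(\vec a)^{\cu M_0}$ for every parameter tuple $\vec a$ from $M_0$. Atomic formulas hold by the very definition of the extensions, and the continuous connective case follows from pointwise continuity. The delicate case is the quantifier: one must show
$$\sup_{x \in \hat{M}} \varphi(x, \vec a)^{\hat{\cu M}} = \sup_{x \in M_0} \varphi(x, \vec a)^{\cu M_0},$$
and dually for $\inf$. The direction $\ge$ is trivial since $M_0 \subseteq \hat{M}$. For $\le$, the inductive hypothesis yields a uniform continuity modulus for the function $x \mapsto \varphi(x, \vec a)^{\hat{\cu M}}$ that is computable from the moduli of the atomic symbols and the continuous connectives appearing in $\varphi$; since $M_0$ is dense in $\hat{M}$, every value attained in $\hat{\cu M}$ is approximated within any $\varepsilon > 0$ by a value attained in $\cu M_0$. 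Specializing to sentences ($\vec a$ empty) closes the induction and yields $\varphi^{\hat{\cu M}} = \varphi^{\cu M_0} = \varphi^{\cu M}$, hence elementary equivalence. The required bookkeeping of uniform continuity moduli for arbitrary formulas is exactly what the continuity calculus developed in [BBHU] is designed to supply.
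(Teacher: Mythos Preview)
The paper does not supply its own proof of this statement; it is recorded as a Fact with a citation to Theorem~3.7 of [BBHU].  Your proposal is a correct reconstruction of that standard argument---quotient by the $d$-kernel, complete, and verify elementary equivalence by induction on formulas using density plus the uniform continuity of every formula---so there is nothing to compare: you have filled in exactly the proof the paper defers to the literature.

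One small point worth making explicit in your write-up: before forming the quotient you need $\sim$ to be an equivalence relation, which requires $d^{\cu M}$ to be a pseudometric (symmetry, $d(x,x)=0$, triangle inequality).  In the general $[0,1]$-valued setting these properties are not automatic; they hold here because the pseudometric axioms for $d$ are themselves continuous sentences and are understood to be among the sentences in $T_V$ that ``specify the moduli for $\BL$'' (in [BBHU] every prestructure already interprets $d$ as a pseudometric by definition).  With that clarification your argument is complete.
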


Thus when $T_V$ and $T_W$ contain  sentences that specify  the uniform continuity moduli for $\BL$,
 Theorem \ref{t-weak2} is a special case of Theorem \ref{t-weak3}, and
 Theorem \ref{t-strong2} is a special case of Theorem \ref{t-strong3}.

\end{document}